\theoremstyle{plain}
\newtheorem{thm}{Theorem}[section]
\newtheorem*{thm*}{Theorem}
\numberwithin{equation}{section}
\newtheorem{cor}{Corollary}[section]
\newtheorem{prop}{Proposition}[section]
\newtheorem{defn}{Definition}[section]
\newtheorem{conj}{Conjecture}
\theoremstyle{definition}
\newcounter {own}
\def\theown {\thesection  .\arabic{own}}
\newenvironment{pf}[1][]{%
 \vskip 3mm
 \noindent
 \ifthenelse{\equal{#1}{}}%
  {{\slshape Proof. }}%
  {{\slshape #1.} }%
 }%
{\qed\bigskip}
\newcounter{alphabet}
\newcounter{tmp}
\newcommand{\ds}{\displaystyle}
\newcounter{minutes}\setcounter{minutes}{\time}
\newcounter{hours}\setcounter{hours}{\time}
\begin{document}
\bibliographystyle{amsplain}
\title{A new sampling density condition for shift-invariant spaces}

\thanks{
File:~\jobname .tex,
          printed: 2013-07-30,
          \thehours.\ifnum\theminutes<10{0}\fi\theminutes}

\author{A. Antony Selvan}

\address{A. Antony Selvan, Post Doctoral Fellow,
The Institute of Mathematical Sciences, Chennai--600 113, India.}
\email{antonyaans@gmail.com}
%
\maketitle
\pagestyle{myheadings}
\markboth{A. Antony Selvan}{A new sampling density condition for shift-invariant spaces}

\begin{abstract}
Let $X=\{x_i:i\in\mathbb{Z}\}$, $\dots<x_{i-1}<x_i<x_{i+1}<\dots$, be a sampling set which is separated by a constant $\gamma>0$. Under certain conditions on $\phi$, it is proved that 
if there exists a positive integer $\nu$ such that 
$$\delta_\nu:=\sup\limits_{i\in\mathbb{Z}}(x_{i+\nu}-x_i)<\dfrac{\nu}{2\pi}\left(\dfrac{c_{k}^2}{M_{2k}}\right)^{\frac{1}{4k}},$$ then 
every function belonging to a shift-invariant space $V(\phi)$  can be reconstructed stably from its nonuniform sample values  $\{f^{(j)}(x_i):j=0,1,\dots, k-1, i\in\mathbb{Z}\}$,
where $c_k$ is a Wirtinger-Sobolev constant and $M_{2k}$ is a constant in Bernstein-type  inequality of $V(\phi)$. Further, when $k=1$, the maximum gap 
$\delta_\nu<\nu$ is sharp for certain shift-invariant spaces.
\vspace{5mm} \\
\noindent {\it Key words and phrases} : bandlimited functions, Bernstein's inequality, frames, Hermite interpolation, nonuniform sampling, shift-invariant spaces, Wirtinger-Sobolev inequality.
\vspace{3mm}\\
\noindent {\it 2000 AMS Mathematics Subject Classification} ---42C15, 94A20
\end{abstract}

%
%
\section{Introduction}
Let $\mathcal{B}_\sigma$ denote the space of all $\sigma$-bandlimited functions, \textit{i.e.,}
\begin{eqnarray*}
\mathcal{B}_\sigma=\left\{f\in L^2(\mathbb{R}): \textrm{supp}\widehat{f}\subseteq [-\tfrac{\sigma}{2\pi},\tfrac{\sigma}{2\pi}]\right\},
\end{eqnarray*}
where $\widehat{f}$ denotes the Fourier transform of $f$, defined by $\widehat{f}(w)=\int_{-\infty}^\infty f(x)e^{-2\pi ixw}\mathrm{d}x$. The celebrated theorem of Paley-Wiener says that $\mathcal{B}_\sigma$ coincides with the space of
entire functions  of exponential type $\leq\sigma$. It is well known that $\mathcal{B}_\sigma$ is a reproducing kernel Hilbert space with reproducing kernel $\mathcal{K}(x,y)=\tfrac{\sin\sigma\left(x-y\right)}{\sigma \left(x-y\right)}$. 
The classical Shannon's sampling theorem states that every $f\in\mathcal{B}_\sigma$ can be reconstructed from the sampling formula
\begin{eqnarray*}
f(x)=\ds\sum_{k\in\mathbb{Z}}f\left(\dfrac{k\pi}{\sigma}\right)\dfrac{\sin \sigma\left(x-k\pi/\sigma\right)}{\sigma \left(x-k\pi/\sigma\right)}.
\end{eqnarray*}

The problem of uniform sampling is well studied in the literature (\cite{Higg}, \cite{Jerri}, \cite{Papoulis}). In \cite{PaWi}, Paley and Wiener  extended the Shannon's sampling theorem to nonuniform sampling set.
They showed that if $X=\{x_k\in\mathbb{R}:k\in\mathbb{Z}\}$ is such that $|x_k-k|<\tfrac{1}{\pi^2}$, $k\in\mathbb{Z}$, then any $f\in\mathcal{B}_{\pi}$ can be recovered from its sample values $\{f(x_k)$: $k\in\mathbb{Z}\}$.
Later, Kadec in \cite{Kadec} showed that the maximum bound for $|x_k-k|$ has to be less than 0.25.
The general nonuniform sampling theory of bandlimited function is completely understood by the works of Beurling, Landau, and others. 
In fact, if the Beurling density, $D(X)$ of the set $X$, is greater than $\tfrac{\pi}{\sigma}$, then any $\sigma$-bandlimited function $f$ can be reconstructed stably from its sample
values $\{f(x_i) : x_i \in X\}$. We refer to \cite{Beurling}, \cite{Landau} and \cite{Ortega} in this connection.

It is difficult to compute the Beurling density $D(X)$ in general. Instead of Beurling density, Gr\"{o}chenig  in \cite{Grochenig} gave an easy sufficient condition for stable set of sampling for $\mathcal{B}_\sigma$.
In that paper, he proved that if $\sup_{i}(x_{i+1}-x_i)<\tfrac{\pi}{\sigma}$, then any bandlimited function  can be reconstructed from its nonuniform samples $\{f(x_i) :i\in\mathbb{Z}\}$.
In \cite{Raza}, Razafinjatovo obtained a frame algorithm for reconstructing a function $f\in \mathcal{B}_\sigma$ from its nonuniform samples $\{f^{(j)}(x_i):j=0,1,\dots, k-1, i\in\mathbb{Z}\}$ 
with maximum gap condition, namely  $\sup_{i}(x_{i+1}-x_i)=\delta<\frac{2}{\sigma}((k-1)!\sqrt{(2k-1)2k})^{1/k}$ using Taylor's polynomial approximation. Recently, Radha and the author in \cite{AntoRad2}
improved the  maximum gap condition using Hermite interpolation.

Shift-invariant spaces serve as a universal model for sampling problem as it includes a large class of functions whether bandlimited or not by appropriately choosing a generator. 
Sampling in shift-invariant spaces that are not bandlimited is a suitable and realistic model for many applications, e.g., for taking into account real acquisition and
reconstruction devices, for modeling signals with smoother spectrum than is the case
with bandlimited functions, or for numerical implementation (see \cite{AlUnser}, \cite{FeiGr} and \cite{UnserAl}). 
The problem of non-uniform sampling in general shift-invariant spaces was studied by Aldroubi and Gr\"{o}chenig in \cite{AlGr2}.

The fundamental problem of sampling in shift-invariant spaces is to find a characterization of stable set of sampling in shift-invariant spaces.
Sampling theorems in shift-invariant spaces are of fundamental importance in digital signal processing since it provides a means of converting an analogue signal into a digital signal from its sample values.
There are many qualitative sampling theorems for shift-invariant spaces. They state that for every reasonable generator $\phi$, there  exists  a $\delta>0$
such  that  every  set  with  maximum gap $\sup_{i}(x_{i+1}-x_i)<\delta$ is a stable set of sampling for $V(\phi)$, 
\textit{i.e.}, there exist $r$, $R>0$ such that
\begin{equation*}
r\|f\|^2\leq\sum_{i\in\mathbb{Z}}|f(x_{i})|^2\leq R\|f\|^2,~\textrm{~for~all~} f\in V(\phi).
\end{equation*}
The only generators for which the sharp result is known are $B$-splines, ripplets, and
totally positive functions of finite order $\geq 2$ (see \cite{AlGr1}, \cite{Grochenig3}). 
For other shift-invariant spaces, the problem of finding the sharp maximum gap condition is still open. 
Recently, Radha and the author in \cite{AntoRad3} found the sharp maximum gap condition for a shift-invariant space generated by a Meyer scaling function.

It is well-known that the best choice  of interpolation points to minimize  
the  error  estimate of the  Lagrange interpolation on  $[-1,1]$ corresponds to the  roots of the Chebyshev 
polynomials  of the first kind. In the case of an arbitrary interval $[a,b]$, the points
$$x_i=\dfrac{b-a}{2}\cos\dfrac{(2i+1)\pi}{2n+2}+\dfrac{b+a}{2},~~i=0,1,\dots,n,$$
are the best choice  of interpolation points to minimize  
the  error  estimate of the  Lagrange interpolation on this interval (see \cite{Davis}).
It is important to study the behaviors of sampling sets which  are chosen with respect to Chebyshev polynomial in a certain way. 
Although the maximum gap condition $\sup_{i}(x_{i+1}-x_i)<1$ is easy to verify, it does not include the Chebyshev nonuniform samples in general. 
For example, consider the  Chebyshev sampling set $X$  whose elements are chosen in this way:
$$x_i=1.5\cos\dfrac{(2i+1)\pi}{8}+1.5,~~i=0,1,2,3,~\text{and}$$
$$x_{4n+j}=nx_3+x_i,~n\in\mathbb{Z}, ~~i=0,1,2,3.$$
Since the set $X$ does not satisfy the property $\sup_{i}(x_{i+1}-x_i)<1$, we cannot use Gr\"{o}chenig's result in \cite{Grochenig}.
Even if we change the distance between atleast two consecutive points to be greater than $1$, Gr\"{o}chenig's result is not applicable.
However, there are sampling sets so that $\sup_{i}(x_{i+1}-x_i)>1$ but there exists $\nu\in\mathbb{N}$ such that $\sup_{i}(x_{i+\nu}-x_i)<a_\nu$, for some $a_\nu>0$.
It is easy to check that the above sampling set satisfies  $\sup_{i}(x_{i+3}-x_i)<3$. From the above discussion, it seems natural to ask the following question: 
\begin{quotation}
\begin{tcolorbox}
\textit{For every reasonable generator $\phi$, does there  exist  a constant  $a_\nu>0$
such  that  every  set  with  maximum gap $$\sup\limits_{i\in\mathbb{Z}}(x_{i+\nu}-x_i)<a_\nu,~~\text{for some}~\nu\in\mathbb{N},$$ is a stable set of sampling for $V(\phi)$?}
\end{tcolorbox}
\end{quotation}

\begin{defn}
For a fixed positive integer $\nu$, a sequence $X=\{x_i:{i\in\mathbb{Z}}\}$, $\dots<x_{i-1}<x_i<x_{i+1}<\cdots,$ is called $\delta_\nu$-dense if $\sup\limits_{i\in\mathbb{Z}}(x_{i+\nu}-x_i)=\delta_\nu.$
\end{defn}
In this paper, we answer the above question for certain shift-invariant spaces. In particular, we prove the following
\begin{thm}\label{pap7thm1.1}
Assume that $\{x_i:{i\in\mathbb{Z}}\}$ is separated by a constant $\gamma>0$. If
\begin{eqnarray}\label{pap7eqn1.1}
 \sup\limits_{i\in\mathbb{Z}}(x_{i+\nu}-x_i)<\nu,~~\text{for some}~\nu\in\mathbb{N},  
\end{eqnarray}
then there exist positive constants $A_\nu$ and $B_\nu$ such that
\begin{eqnarray*}\label{pap3eqn2.7}
A_\nu\| f\|_2^2&\leq&\ds\sum\limits_{i\in\mathbb{Z}}|f(x_i)|^2\leq B_\nu\| f\|_2^2, ~\text{for all}~ f\in V(\phi),
\end{eqnarray*}
where $\phi(x)=\dfrac{\sin \pi x}{\pi x}$ or Meyer scaling function. Further, the maximum gap condition \eqref{pap7eqn1.1} is sharp.
\end{thm}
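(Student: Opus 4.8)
The plan is to establish the two-sided sampling estimate and then the optimality of the constant $\nu$. The upper (Bessel) bound is the easy half and does not use \eqref{pap7eqn1.1}: since $\phi$ is either $\operatorname{sinc}(\pi\cdot)$ or a Meyer scaling function, every $f\in V(\phi)$ is bandlimited and satisfies a first-order Bernstein inequality $\|f'\|_2\le M\|f\|_2$; combining the pointwise bound $|f(x)|^2\le\tfrac{2}{\gamma}\int_{x-\gamma}^{x}|f(t)|^2\,dt+2\gamma\int_{x-\gamma}^{x}|f'(t)|^2\,dt$ with the $\gamma$-separation of $X$ — which makes the intervals $\{[x_i-\gamma,x_i]\}_{i\in\mathbb Z}$ pairwise essentially disjoint — and summing over $i$ gives $\sum_i|f(x_i)|^2\le B_\nu\|f\|_2^2$ with $B_\nu=\tfrac{2}{\gamma}+2\gamma M^2$.

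The lower bound is the substantial part, and here the idea is to convert \eqref{pap7eqn1.1} into a Beurling-density statement. Write $X$ as the disjoint union of the $\nu$ interleaved subsequences $X_r=\{x_{\nu j+r}:j\in\mathbb Z\}$, $r=0,\dots,\nu-1$. Then \eqref{pap7eqn1.1} says precisely that each $X_r$ has ordinary maximal gap $\sup_j(x_{\nu j+r+\nu}-x_{\nu j+r})\le\delta_\nu<\nu$, so counting points of the $X_r$'s in a long interval yields the lower Beurling density bound $D^-(X)\ge\nu/\delta_\nu>1$. For $\phi=\operatorname{sinc}(\pi\cdot)$ one has $V(\phi)=\mathcal{B}_\pi$, and the Beurling--Landau theory for bandlimited functions recalled in the Introduction (\cite{Beurling,Landau}) then produces a positive lower frame bound $A_\nu$ from $D^-(X)>1$ together with the separation and upper-density data of $X$. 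For the Meyer scaling function the same chain works with $\mathcal{B}_\pi$ replaced by $V(\phi)$: the sharp sampling theory of the Meyer shift-invariant space \cite{AntoRad3} has the same critical density $D^-=1$, so $D^-(X)>1$ is again enough. I would remark that this is strictly better than what the general Wirtinger--Sobolev/Bernstein theorem of the paper yields, since that argument (built on interpolation on individual consecutive nodes) can only reach a threshold strictly below $\nu$; the improvement to $\nu$ is bought by the availability of a \emph{sharp} density theorem for these two particular generators.

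Finally, for sharpness I would exhibit the undersampled uniform grid $X_\delta=(1+\delta)\mathbb Z$ with $\delta>0$: it is $(1+\delta)$-separated, its $\nu$-gap $\sup_i(x_{i+\nu}-x_i)=(1+\delta)\nu$ can be taken arbitrarily close to $\nu$ from above, and its density $1/(1+\delta)$ is $<1$. Since $\{\phi(\cdot-k)\}$ is an orthonormal basis of $V(\phi)$ in both cases, stable sampling of $V(\phi)$ forces the necessary density $D^-(X)\ge1$ (the Landau-type lower bound; \cite{Landau} for $\mathcal{B}_\pi$ and \cite{AntoRad3} for the Meyer space), so $X_\delta$ is not a set of stable sampling and the constant $\nu$ in \eqref{pap7eqn1.1} cannot be replaced by anything larger; for $\nu=1$ this is the classical contrast between $\mathbb Z$ (critical sampling) and $(1+\delta)\mathbb Z$. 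I expect the lower bound to be the real obstacle: unlike in Gr\"{o}chenig's setting \cite{Grochenig}, an individual gap $x_{i+1}-x_i$ may here be nearly as large as $\nu$, so no interpolation on consecutive nodes is possible, and it is precisely the inequality $D^-(X)\ge\nu/\delta_\nu$ combined with the sharp density theory of $\mathcal{B}_\pi$ (respectively of the Meyer shift-invariant space) that makes \eqref{pap7eqn1.1} suffice — with the Meyer case resting on the genuinely structural input of \cite{AntoRad3}.
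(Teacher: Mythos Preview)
Your route via Beurling lower density is genuinely different from the paper's. The paper proves Theorem~\ref{pap7thm1.1} not by any density argument but by specializing the general Wirtinger--Sobolev/Bernstein machinery (Theorem~\ref{pap3thm2.2} and Corollary~\ref{pap7cor3.1}) to $k=1$: for both $\operatorname{sinc}$ and the Meyer scaling function one computes the Bernstein constant $M_2=1/16$ (the Meyer case being the explicit calculation in \S3.3 using properties $(P_2)$ and $(P_5)$ of $\vartheta$), and since $c_1=\pi^2$ the threshold $\tfrac{\nu}{2\pi}(c_1^2/M_2)^{1/4}$ collapses to exactly $\nu$. So your parenthetical remark that the paper's interpolation argument ``can only reach a threshold strictly below~$\nu$'' is at odds with the paper: its whole point is that for these two generators the general bound is already sharp. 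Your worry that individual gaps $x_{i+1}-x_i$ may be large is handled in the paper via the elementary inequality \eqref{pap7eqn3.3}, $x_{i+1}-x_i\le\delta_\nu-(\nu-1)\gamma$, which is what feeds into the Wirtinger--Sobolev estimate on each consecutive interval. What the paper's route buys is a constructive iterative reconstruction with an explicit error rate; what your route buys, for $\mathcal B_\pi$, is brevity and a clean appeal to classical Beurling theory.

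There is, however, a genuine gap in your argument for the Meyer case. You invoke ``the sharp sampling theory of the Meyer shift-invariant space \cite{AntoRad3}'' as though it were a Beurling-type density theorem, but as summarized in the introduction, \cite{AntoRad3} establishes only the sharp \emph{maximum-gap} condition $\sup_i(x_{i+1}-x_i)<1$. A maximum-gap theorem does not imply that $D^-(X)>1$ suffices for sampling: a separated set can have $D^-(X)>1$ while individual gaps $x_{i+1}-x_i$ are arbitrarily large---precisely the situation you are trying to cover. Nor can you fall back on the inclusion $V(\phi)\subset\mathcal B_{4\pi/3}$, since Beurling for $\mathcal B_{4\pi/3}$ would require $D^->4/3$ and your bound $D^-\ge\nu/\delta_\nu$ only gives $>1$. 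So as written your argument settles $\mathcal B_\pi$ but not the Meyer space; the paper's computation of $M_2=1/16$ for the Meyer generator is doing real work that your proposal bypasses without replacement. Finally, your sharpness example $X_\delta=(1+\delta)\mathbb Z$ shows the constant $\nu$ cannot be enlarged, but the paper uses the Kadec-type set $x_i=i-\tfrac14\operatorname{sgn}(i)$ (with $x_0=0$), which has $\delta_\nu=\nu$ on the nose and is not a sampling set for $\mathcal B_\pi$ by \cite{Young}; this yields the slightly stronger conclusion that the strict inequality in \eqref{pap7eqn1.1} cannot be relaxed to~$\le$.
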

The paper is organized as follows. In section 2, we discuss some useful facts in shift-invariant spaces, Wirtinger-Sobolev inequality and Hermite interpolation. In section 3, we first prove Bernstein inequality for
shift-invariant space $V(\phi)$ and provide a sufficient condition for stable set of sampling in $V(\phi)$. In section 4, we mention some open problems.
\section{Preliminaries}
This section provides some useful terminology and results in order to prove our main results.
\begin{defn}
A sequence of vectors $\{f_n:~n\in\mathbb{Z}\}$ in  a separable
Hilbert space $\mathcal{H}$ is said to be a {\it frame} if there
exist constants $0< m \leq M<\infty$ such that
\begin{equation}
m\|f\|_\mathcal{H}^2\leq\displaystyle\sum_{n\in\mathbb{Z}}|\langle
f,f_n\rangle_\mathcal{H}|^2\leq M\|f\|_\mathcal{H}^2,
\end{equation}
for every $f\in\mathcal{H}$.
\end{defn}
\begin{defn}
A sequence of vectors $\{f_n:~n\in\mathbb{Z}\}$ in  a separable
Hilbert space $\mathcal{H}$ is said to be a {\it Riesz basis} if
$\overline{span\{f_n\}}=\mathcal{H}$ and there exist constants $0< c
\leq C<\infty$ such that
\begin{equation}
c\sum_{n\in\mathbb{Z}}|d_n|^2\leq\big\|\sum_{n\in\mathbb{Z}}d_nf_n\big\|^2_{\mathcal{H}}\leq
C\sum_{n\in\mathbb{Z}}|d_n|^2,
\end{equation}
for all $(d_n)\in\ell^2(\mathbb{Z})$. 
\end{defn}
\begin{defn}
A closed subspace $M$ of $L^2(\mathbb{R})$ is called a {\it shift-invariant space} if $T_{n}\phi\in M$, for every $\phi\in M$ and
$n\in\mathbb{Z}$, where $T_n$ is the translation operator
defined by $T_n\phi(x)=\phi(x-n)$, for all $x\in\mathbb{R}$.
For $\phi\in L^2(\mathbb{R})$,
$\overline{span\{T_n\phi:n\in\mathbb{Z}\}}$ is called shift-invariant space generated by $\phi$ and is denoted by
$V(\phi)$.
\end{defn}
Every Riesz basis is a frame. It is well known that $\{T_n\phi:n\in\mathbb{Z}\}$ is a Riesz
basis for $V(\phi)$ if and only if
\begin{equation}\label{pap4eqn1.3}
 0<c\leq G_\phi(w)\leq C<\infty \hspace{1.5cm}
 a.e.~~w\in\mathbb{R},
\end{equation}
where
$G_\phi(w):=\displaystyle\sum_{n\in\mathbb{Z}}|\widehat{\phi}(w+n)|^2$.
Moreover, $\|G_\phi\|_0:=\mathop{\rm{essinf}}
\limits_{w\in[0,1]} G_\phi(w)$ and
$\|G_\phi\|_{\infty}:=\mathop{\rm{esssup}}\limits_{w\in[0,1]}
G_\phi(w)$ are the optimal Riesz bounds for $\{T_n\phi:n\in\mathbb{Z}\}$.
Recall that a closed subspace $V$ of
$L^2(\mathbb{R})$ is said to be a reproducing kernel Hilbert space if there exists a function $\mathcal{K}_x\in V$ such that
\begin{equation}
f(x)=\langle f,\mathcal{K}_x\rangle,~ \textrm{for every}~ f\in V.
\end{equation}
The function $\mathcal{K}(x,y):=\mathcal{K}_x(y)=\langle \mathcal{K}_x,\mathcal{K}_y\rangle$ is called the
reproducing kernel of $V$. If $\{\phi_n\}$ is a Riesz basis for $V$, then
\begin{equation}
\mathcal{K}(x,y)=\displaystyle\sum_n\overline{\phi_n(x)}\widetilde{\phi}_n(y)
\end{equation}
is the reproducing kernel for $V$, where $\{\widetilde{\phi}_n\}$ is
the dual basis of $\{\phi_n\}$. If $\phi$ is a continuous function having compact support, then $V(\phi)$ is a reproducing kernel Hilbert space.
For a detailed study of sampling and reconstruction in shift-invariant spaces, we refer to \cite{AlGr2}.
\begin{defn}
Let $\{x_{i}:i\in\mathbb{Z}\}$ be a sequence of real or complex numbers. Then
\begin{enumerate}
\item [$(i)$]$\{x_{i}:i\in\mathbb{Z}\}$ is separated by a constant $\gamma>0$ if $\inf\limits_{i\neq j} |x_i-x_j|\geq\gamma$.
\item [$(ii)$]$\{x_{i}:i\in\mathbb{Z}\}$ is said to be a  stable set of sampling for the reproducing kernel Hilbert space $\mathcal{H}$ 
if there exist constants $r$, $R>0$ such that
\begin{equation*}
r\|f\|^2\leq\sum_{i\in\mathbb{Z}}|f(x_{i})|^2\leq R\|f\|^2,~\textrm{~for~all~} f\in \mathcal{H}.
\end{equation*}
\end{enumerate}
\end{defn}
\begin{defn}
The Wiener amalgam space $W(C,\ell^1)$ is defined as
\begin{equation*}
W(C,\ell^1):=\left\{f\in C(\mathbb{R}):\|f\|_W:=\sum_{n\in\mathbb{Z}}
\max\limits_{x\in[0,1]}|f(x+n)|<\infty\right\}.
\end{equation*}
\end{defn}
Wiener amalgam spaces have been introduced by Feichtinger in \cite{Feich}. The basic idea behind these spaces is to generate spaces of functions or distributions which show a local behaviour (local membership in a certain function space) and 
a certain global behaviour (expressed in terms of local norm). If $\phi\in W(C,\ell^1)$, then $V(\phi)$ is a reproducing kernel Hilbert space (see \cite{AlGr1}).
\begin{defn}
Let $g\in L^2(\mathbb{R})$ be a non-zero window function. Then
the windowed Fourier transform of $f\in L^2(\mathbb{R})$  is defined as
\begin{eqnarray}\label{STFT}
F_gf(x,y):=\int\limits_{-\infty}^{\infty}f(t)\overline{g(t-x)}e^{-2\pi ity}~\mathrm{d}t.
\end{eqnarray}
\end{defn}
The windowed Fourier transform was developed to obtain a frequency
analysis which is also local in time for a given signal. 
The connection between windowed Fourier transform and sampling in shift-invariant spaces
is given in \cite{Grochenig3}.

The following three results are the main ingredients to prove our main results.

\begin{thm}[Wirtinger-Sobolev inequality]\label{pap2WirSob}$(\mathop{\rm cf.~}$\cite{BoWi}$)$.
Let $f$ be a complex valued  function defined on the interval $\left[a,b\right]$.
If $f\in C^{r}\left[a,b\right]$ with $f^{(l)}(a)=f^{(l)}(b)=0$, $0\leq l\leq r-1$, then
\begin{eqnarray}\label{pap7eqn2.6}
\int\limits_a^b|f(x)|^2~\mathrm{d}x\leq \dfrac{(b-a)^{2r}}{c_r}\int\limits_a^b|f^{(r)}(x)|^2~\mathrm{d}x,
\end{eqnarray}
where $c_r$ is the minimal eigenvalue of the boundary value problem
\begin{eqnarray*}
(-1)^r u^{(2r)}(x)=\lambda u(x),~~ x\in[0,1],\\
u^{(k)}(0)=u^{(k)}(1)=0, ~~0\leq k\leq r-1,~~u\in C^{2r}[0,1].
\end{eqnarray*}
\end{thm}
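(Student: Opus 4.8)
The plan is to treat this as a spectral (Rayleigh-quotient) statement, reducing the general interval to $[0,1]$ by an affine rescaling that produces the factor $(b-a)^{2r}$ automatically. First I would substitute $x=a+(b-a)t$ and set $g(t)=f(a+(b-a)t)$. The boundary conditions transform correctly, since $g^{(l)}(0)=(b-a)^l f^{(l)}(a)=0$ and $g^{(l)}(1)=(b-a)^l f^{(l)}(b)=0$ for $0\le l\le r-1$, so $g\in C^r[0,1]$ satisfies the same vanishing conditions at both endpoints. A direct change of variables gives $\int_a^b|f|^2\,\mathrm{d}x=(b-a)\int_0^1|g|^2\,\mathrm{d}t$ and $\int_0^1|g^{(r)}|^2\,\mathrm{d}t=(b-a)^{2r-1}\int_a^b|f^{(r)}|^2\,\mathrm{d}x$, so the claimed inequality is equivalent to the normalized statement
$$\int_0^1|g(t)|^2\,\mathrm{d}t\le\frac{1}{c_r}\int_0^1|g^{(r)}(t)|^2\,\mathrm{d}t.$$

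To establish this, I would identify $c_r$ as the bottom of the spectrum of the quadratic form $u\mapsto\int_0^1|u^{(r)}|^2$. The key computation is that for any $u$ satisfying the boundary conditions, integrating by parts $r$ times yields
$$(-1)^r\int_0^1 u^{(2r)}(t)\,\overline{u(t)}\,\mathrm{d}t=\int_0^1|u^{(r)}(t)|^2\,\mathrm{d}t,$$
where every boundary term vanishes precisely because $u^{(k)}(0)=u^{(k)}(1)=0$ for $0\le k\le r-1$. Thus the operator $Lu=(-1)^r u^{(2r)}$ with these boundary conditions is self-adjoint and nonnegative, its Dirichlet form is exactly $\int_0^1|u^{(r)}|^2$, and the associated eigenvalue problem is the stated boundary value problem. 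Standard Sturm--Liouville theory for this regular $2r$-th order problem provides a discrete spectrum $0<c_r=\lambda_1\le\lambda_2\le\cdots\to\infty$ with a complete orthonormal system of eigenfunctions $\{u_n\}$ in $L^2[0,1]$.

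The inequality then follows from the eigenfunction expansion. Writing $g=\sum_n a_n u_n$, Parseval's identity gives $\int_0^1|g|^2=\sum_n|a_n|^2$, while the Dirichlet-form identity yields $\int_0^1|g^{(r)}|^2=\sum_n\lambda_n|a_n|^2\ge\lambda_1\sum_n|a_n|^2=c_r\int_0^1|g|^2$, which is exactly the normalized inequality. Equivalently, one may phrase the argument as the variational characterization that $c_r$ is the minimum of $\int_0^1|u^{(r)}|^2$ over all admissible $u$ with $\int_0^1|u|^2=1$ and $u^{(k)}(0)=u^{(k)}(1)=0$ for $0\le k\le r-1$.

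The main obstacle I anticipate is the regularity bookkeeping: the hypothesis only gives $g\in C^r[0,1]$, whereas the eigenfunction expansion converges naturally in the Sobolev space $H^r_0(0,1)$. I would resolve this by noting that the vanishing conditions place $g$ in $H^r_0(0,1)$, that both sides of the inequality are continuous with respect to the $H^r$-norm, and that the $C^r$ functions satisfying these conditions are dense in $H^r_0(0,1)$; a limiting argument then extends the inequality from finite eigenfunction combinations to every admissible $g$. The only other point requiring care is verifying that the boundary terms in the repeated integration by parts genuinely cancel, which I would confirm by induction on the number of integrations, each step consuming exactly one of the conditions $u^{(k)}(0)=u^{(k)}(1)=0$.
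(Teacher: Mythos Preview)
Your argument is the standard variational/spectral proof and is essentially correct: the affine rescaling is handled properly, the integration-by-parts computation identifying $\int_0^1|u^{(r)}|^2$ as the Dirichlet form of $(-1)^r u^{(2r)}$ is right, and the Rayleigh-quotient bound via the minimal eigenvalue is exactly how such inequalities are obtained. Your remark about working in $H^r_0(0,1)$ to accommodate functions that are only $C^r$ is the appropriate way to close the regularity gap.

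However, there is nothing to compare against: the paper does not prove this theorem at all. It is stated in the Preliminaries section with the attribution ``(cf.~\cite{BoWi})'' and is simply quoted from B\"ottcher--Widom as a known ingredient. The paper's only contribution around this statement is the observation (recorded just after the theorem, with a reference to \cite{AntoRad3}) that the right-hand side can be replaced by $c_r^{-2}(b-a)^{4r}\int_a^b|f^{(2r)}|^2\,\mathrm{d}x$ when $f\in C^{2r}$, together with the numerical values and asymptotics of $c_r$. So your proof is a valid supplement, but it is not reproducing or paralleling anything the authors wrote.
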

It is proved in \cite{AntoRad3} that Wirtinger-Sobolev inequality is still true if the right hand side of \eqref{pap7eqn2.6} is replaced by
\begin{eqnarray}\label{pap7eqn2.6a}
\dfrac{1}{c_r^2}(b-a)^{4r}\int\limits_a^b|f^{(2r)}(x)|^2~\mathrm{d}x,
\end{eqnarray}
$f\in C^{2r}\left[a,b\right]$ with $f^{(l)}(a)=f^{(l)}(b)=0$, $0\leq l\leq r-1$.
We explicitly mention the values and bounds for the constants $c_r$ as given in \cite{BoWi}.
\begin{eqnarray*}
c_1=\pi^2, c_2=500.5467, c_3=61529.
\end{eqnarray*}
For any $r\geq 1$, the Wirtinger-Sobolev constant $c_k$ satisfies the following property:
\begin{eqnarray}\label{pap3eqn2.18}
\dfrac{4r-2}{4r^2-r}\dfrac{(4r)!(r!)^2}{(2r!)^2}\leq c_r\leq\dfrac{4r+1}{2r+1}\dfrac{(4r)!(r!)^2}{(2r!)^2}.
\end{eqnarray}
Further, as $r\to \infty$, $c_r=\sqrt{8\pi r}\left(\dfrac{4r}{e}\right)^{2r}\left[1+\mathcal{O}\Big(\dfrac{1}{\sqrt{r}}\Big)\right]$.
\begin{prop}$(\mathop{\rm cf.~}$\cite{FeiGr},\cite{Raza}$)$.\label{pap2prop2.1}
Let $A$ be a bounded operator on a Hilbert space $\mathcal{H}$ that satisfies
\begin{align*}
\|f-Af\|_\mathcal{H}\leq C\|f\|_\mathcal{H},
\end{align*}
for every $f\in \mathcal{H}$ and for some $C$, $0<C<1$.
Then $A$ is invertible on $\mathcal{H}$ and $f$ can be recovered from $Af$ by the following iteration algorithm.
Setting
\begin{eqnarray*}
f_0&=& A f ~and \\
f_{n+1}&=&f_n+A(f-f_n), ~n\geq 0,
\end{eqnarray*}
we have $\lim\limits_{n\to\infty}f_n=f$. The error estimate after $n$ iterations is
\begin{align*}
\|f-f_n\|_\mathcal{H}\leq C^{n+1}\|f\|_\mathcal{H}.
\end{align*}
\end{prop}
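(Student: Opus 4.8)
The plan is to reduce the statement to the convergence of a Neumann series for the operator $B := I - A$. The pointwise hypothesis $\|f - Af\|_\mathcal{H} \le C\|f\|_\mathcal{H}$ says precisely that $\|Bf\|_\mathcal{H} \le C\|f\|_\mathcal{H}$ for every $f \in \mathcal{H}$, so the operator norm satisfies $\|B\| \le C < 1$. My first step would therefore be to record that, since $\|B\| < 1$, the Neumann series $\sum_{k=0}^\infty B^k$ converges in the operator norm and furnishes a bounded inverse for $I - B = A$. This immediately gives invertibility of $A$ on $\mathcal{H}$, with $A^{-1} = \sum_{k\ge 0} B^k$ and $\|A^{-1}\| \le (1-C)^{-1}$. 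Note also that $A$ and $B = I - A$ commute, a fact I will use below.

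The second step is to identify the error of the iteration with iterates of $B$. Writing $g_n := f - f_n$ for the error after $n$ steps, the initialization $f_0 = Af$ gives $g_0 = f - Af = Bf$. For the recursion $f_{n+1} = f_n + A(f - f_n)$ I would substitute $f - f_n = g_n$ and compute $g_{n+1} = f - f_{n+1} = (f - f_n) - A g_n = (I - A)g_n = B g_n$. By induction this yields the closed form $g_n = B^n g_0 = B^{n+1} f$, which is the heart of the argument. The third step is then purely quantitative: from $g_n = B^{n+1}f$ and $\|B\| \le C$ we obtain $\|f - f_n\|_\mathcal{H} = \|B^{n+1} f\|_\mathcal{H} \le \|B\|^{n+1}\|f\|_\mathcal{H} \le C^{n+1}\|f\|_\mathcal{H}$, which is exactly the claimed error estimate and, because $0 < C < 1$, forces $\lim_{n\to\infty} f_n = f$.

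There is no serious analytic obstacle here; the only point deserving care is the assertion that $f$ is genuinely \emph{recovered from} $Af$ by this scheme. The recursion appears to involve the unknown $f$ through the term $A(f - f_n) = Af - Af_n$, but $Af$ is the given datum and $f_n$ (hence $Af_n$) is computed at the previous stage, so each iterate depends on $f$ only through the known vector $Af$. I would make this explicit so that the convergence $f_n \to f$ legitimately constitutes a reconstruction algorithm rather than a mere abstract identity. Equivalently, combining $f_n = (I - B^{n+1})f$ with the finite geometric identity $\sum_{k=0}^{n} B^k = A^{-1}(I - B^{n+1})$ and the commutativity of $A$ and $B$ shows $f_n = \big(\sum_{k=0}^{n} B^k\big) Af$, the $n$-th partial sum of the Neumann series applied to the datum $Af$; this partial sum converges to $A^{-1}(Af) = f$, recovering both the limit and the exponential rate from the first step.
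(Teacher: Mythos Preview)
Your argument is correct and is exactly the standard Neumann-series proof: set $B=I-A$, use $\|B\|\le C<1$ to invert $A$, and show by induction that $f-f_n=B^{n+1}f$, which gives both the error bound and the convergence; your remark that each iterate depends on $f$ only through the datum $Af$ is also well taken. The paper itself does not supply a proof of this proposition---it is quoted as a known tool with references to \cite{FeiGr} and \cite{Raza}---so there is nothing further to compare.
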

\begin{thm}[Hermite Interpolation Formula] $(\mathop{\rm cf.~}$\cite{Spitzbart}$)$.
Let $f\in C^r\left[a,b\right]$ and $\xi,\eta\in[a,b]$. Then the Hermite interpolation polynomial $H_{2r+1}(x)$ of degree $2r+1$ such that
$H_{2r+1}^{(j)}(y)=f^{(j)}(y)$, for $y=\xi,\eta$, $0\leq j\leq r$, is given by
\begin{eqnarray}
H_{2r+1}(\xi,\eta,f;x)=\ds\sum\limits_{k=0}^{r}A_{0k}(x)f^{(k)}(\xi)+\ds\sum\limits_{k=0}^{r}A_{1k}(x)f^{(k)}(\eta),
\end{eqnarray}
where
\begin{eqnarray*}
A_{0k}(x)&=&(x-\eta)^{r+1}\dfrac{(x-\xi)^k}{k!}\ds\sum\limits_{s=0}^{r-k}\dfrac{1}{s!}g_0^{(s)}(\xi)(x-\xi)^s,\\
A_{1k}(x)&=&(x-\xi)^{r+1}\dfrac{(x-\eta)^k}{k!}\ds\sum\limits_{s=0}^{r-k}\dfrac{1}{s!}g_1^{(s)}(\eta)(x-\eta)^s,\\
g_0(x)&=&(x-\eta)^{-(r+1)},\\
g_1(x)&=&(x-\xi)^{-(r+1)}.
\end{eqnarray*}
\end{thm}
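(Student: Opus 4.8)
\emph{Proof proposal.}
The plan is to verify directly that the stated polynomial $H_{2r+1}$ meets the $2r+2$ prescribed conditions. Since both the set of conditions and the space of polynomials of degree $\le 2r+1$ have dimension $2r+2$, it suffices to exhibit one polynomial that works, and uniqueness then follows from a short divisibility argument. The entire problem reduces to showing that the coefficient functions $A_{0k}$ and $A_{1k}$ are \emph{cardinal}, in the sense that, for $0\le j\le r$,
$$A_{0k}^{(j)}(\xi)=\delta_{jk},\quad A_{0k}^{(j)}(\eta)=0,\quad A_{1k}^{(j)}(\eta)=\delta_{jk},\quad A_{1k}^{(j)}(\xi)=0,$$
where $\delta_{jk}$ is the Kronecker symbol. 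Granting these, substituting $x=\xi$ into $H_{2r+1}$ and differentiating $j$ times collapses the double sum to a single surviving term, giving $H_{2r+1}^{(j)}(\xi)=f^{(j)}(\xi)$, and symmetrically $H_{2r+1}^{(j)}(\eta)=f^{(j)}(\eta)$.

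By the symmetry $\xi\leftrightarrow\eta$, $g_0\leftrightarrow g_1$, it is enough to treat $A_{0k}$. First, $A_{0k}$ carries the factor $(x-\eta)^{r+1}$, so all its derivatives up to order $r$ vanish at $x=\eta$, settling $A_{0k}^{(j)}(\eta)=0$. For the behaviour at $\xi$ I would set $h(x):=(x-\eta)^{r+1}$, so that $g_0=1/h$ and $h\,g_0\equiv 1$, and note that the inner sum is precisely the degree-$(r-k)$ Taylor polynomial $T_{r-k}g_0$ of $g_0$ about $\xi$. The crucial local identity is
$$h(x)\,T_{r-k}g_0(x)=h(x)g_0(x)-h(x)\big(g_0-T_{r-k}g_0\big)(x)=1+O\big((x-\xi)^{r-k+1}\big),$$
where the remainder estimate uses $g_0-T_{r-k}g_0=O\big((x-\xi)^{r-k+1}\big)$ together with the smoothness of $h$ near $\xi$ (here $\xi\ne\eta$ is needed so that $h(\xi)\ne 0$). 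Consequently, near $\xi$,
$$A_{0k}(x)=\frac{(x-\xi)^k}{k!}\Big(1+O\big((x-\xi)^{r-k+1}\big)\Big)=\frac{(x-\xi)^k}{k!}+O\big((x-\xi)^{r+1}\big),$$
the factor $(x-\xi)^k$ promoting the remainder from order $r-k+1$ to order $r+1$. Differentiating $j$ times and setting $x=\xi$ then annihilates the remainder for $j\le r$ and extracts $\delta_{jk}$ from the leading monomial, as required.

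It remains to record the degree and uniqueness. Each $A_{0k}$ is a genuine polynomial of degree $(r+1)+k+(r-k)=2r+1$, and likewise each $A_{1k}$, so $H_{2r+1}$ has degree $\le 2r+1$. For uniqueness, if $P$ denotes the difference of two admissible interpolants, then $P$ has degree $\le 2r+1$ and vanishes to order $r+1$ at both $\xi$ and $\eta$; hence $P$ is divisible by $(x-\xi)^{r+1}(x-\eta)^{r+1}$, a polynomial of degree $2r+2$, which forces $P\equiv 0$. I expect the Taylor-polynomial identity for $h\,T_{r-k}g_0$ to be the only delicate point—tracking the exact order of the remainder so that the $(x-\xi)^k/k!$ factor lifts it to order $r+1$—while the vanishing conditions at $\eta$, the degree count, and the uniqueness step are all immediate.
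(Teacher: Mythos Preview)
The paper does not prove this statement; it is quoted from Spitzbart (1960) in the preliminaries and used as a black box. Your proof is correct and self-contained: the reduction to the cardinal conditions $A_{0k}^{(j)}(\xi)=\delta_{jk}$, $A_{0k}^{(j)}(\eta)=0$ (and symmetrically for $A_{1k}$), the Taylor-remainder argument showing $h(x)\,T_{r-k}g_0(x)=1+(x-\xi)^{r-k+1}Q(x)$ for some polynomial $Q$, and the divisibility argument for uniqueness all go through exactly as you describe. The implicit hypothesis $\xi\neq\eta$, which you flag, is indeed needed for $g_0,g_1$ to make sense and is tacit in the paper's statement. This is essentially the verification Spitzbart himself gives, so you have reconstructed the standard argument.
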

\section{Main Results}
Let $\mathcal{A}^r$ denote the class of $r$-times differentiable functions $\phi$ defined on $\mathbb{R}$ satisfying the following conditions:
\begin{itemize}
 \item[$(i)$] for some $\epsilon>0$, $\phi^{(s)}(x)=\mathcal{O}(|x|^{-0.5-\epsilon})$ as $x\to\pm\infty$, $s=0,1,\dots,r$.\\
\item [$(ii)$]$\mathop{\rm{esssup}}\limits_{w\in[0,1]}\ds\sum\limits_{l\in\mathbb{Z}}(w+l)^{2s}|\widehat{\phi}(w+l)|^2<\infty$,  $s=1,2,\dots,r$.
\end{itemize}
Clearly if $\phi$ is a continuously $r$- times differentiable function with compact support, then $\phi\in\mathcal{A}^r$. It is easy to prove that $\mathcal{A}^r\subseteq W(C,\ell^1)$,
which implies that $V(\phi)$ is a reproducing kernel Hilbert space.
We now assume that $\{T_n\phi:n\in\mathbb{Z}\}$ forms a Riesz basis for $V(\phi)$.  For $\phi\in\mathcal{A}^r$,  define
\begin{equation}
B_{s}(w):=\dfrac{\sum\limits_{l\in\mathbb{Z}}(w+l)^{2s}|\widehat{\phi}(w+l)|^2}{\sum\limits_{l\in\mathbb{Z}}|\widehat{\phi}(w+l)|^2},~w\in\mathbb{R},~s=1,2,\dots,r.
\end{equation}
Clearly $B_{s}(w)$ is a periodic function with period $1$.
If $M_{s}:=\mathop{\rm{esssup}}\limits_{w\in[0,1]}B_{s}(w)$, then
$$M_s\geq\dfrac{\sum\limits_{l\in\mathbb{Z}}(1/2+l)^{2s}|\widehat{\phi}(w+l)|^2}{\sum\limits_{l\in\mathbb{Z}}|\widehat{\phi}(w+l)|^2}\geq\dfrac{1}{4^s}.$$
\begin{thm}\label{pap4thm2.3}
Let $\phi\in\mathcal{A}^r$ be such that $\{T_{n}\phi:n\in\mathbb{Z}\}$ forms a Riesz basis for $V(\phi)$.
Then we have the Bernstein-type inequality
\begin{equation}\label{pap7eqn3.2}
\|f^{(s)}\|_2\leq(2\pi)^{s}\sqrt{M_{s}}\|f\|_2,~~s=1,2,\dots,r,
\end{equation}
for every $f\in V(\phi)$. Moreover, the constant $M_s$ depending on $\phi$ is sharp.
\end{thm}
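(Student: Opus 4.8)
The plan is to work on the Fourier side, where $V(\phi)$ is described by a $1$-periodic multiplier and differentiation becomes multiplication by $(2\pi i w)^s$, so that \eqref{pap7eqn3.2} reduces to the pointwise bound $B_s\le M_s$. First I would record the Fourier picture of $V(\phi)$: since $\{T_n\phi:n\in\mathbb{Z}\}$ is a Riesz basis, any $f\in V(\phi)$ has the form $f=\sum_{n\in\mathbb{Z}}c_n T_n\phi$ with $(c_n)\in\ell^2(\mathbb{Z})$, hence $\widehat f(w)=m_f(w)\widehat\phi(w)$, where $m_f(w)=\sum_{n\in\mathbb{Z}}c_n e^{-2\pi i n w}$ is $1$-periodic and lies in $L^2[0,1]$. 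Folding the Plancherel integral by the periodicity of $m_f$ and using the notation $G_\phi$ gives $\|f\|_2^2=\int_{\mathbb{R}}|m_f|^2|\widehat\phi|^2=\int_0^1|m_f(w)|^2 G_\phi(w)\,\mathrm{d}w$.

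Next, using $\widehat{f^{(s)}}(w)=(2\pi i w)^s\widehat f(w)$, Plancherel, and the same folding (with $|w+l|^{2s}=(w+l)^{2s}$), I would write
\begin{align*}
\|f^{(s)}\|_2^2
&=(2\pi)^{2s}\int_{\mathbb{R}}|w|^{2s}|\widehat\phi(w)|^2|m_f(w)|^2\,\mathrm{d}w\\
&=(2\pi)^{2s}\int_0^1|m_f(w)|^2\Big(\sum_{l\in\mathbb{Z}}(w+l)^{2s}|\widehat\phi(w+l)|^2\Big)\,\mathrm{d}w\\
&=(2\pi)^{2s}\int_0^1|m_f(w)|^2 B_s(w) G_\phi(w)\,\mathrm{d}w,
\end{align*}
the last equality being just multiplication and division by $G_\phi$. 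Bounding $B_s(w)\le M_s$ almost everywhere and comparing with the formula for $\|f\|_2^2$ yields $\|f^{(s)}\|_2^2\le(2\pi)^{2s}M_s\|f\|_2^2$, which is \eqref{pap7eqn3.2}.

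For the sharpness claim I would run the same identity backwards on a tailored multiplier: given $\epsilon>0$, the level set $E_\epsilon=\{w\in[0,1]:B_s(w)>M_s-\epsilon\}$ has positive Lebesgue measure since $M_s$ is an essential supremum; taking $m_f=\chi_{E_\epsilon}$, extended $1$-periodically, produces a nonzero $f\in V(\phi)$ (nonzero because $\|f\|_2^2=\int_{E_\epsilon}G_\phi>0$ by \eqref{pap4eqn1.3}) with
\[
\|f^{(s)}\|_2^2=(2\pi)^{2s}\int_{E_\epsilon}B_s(w)G_\phi(w)\,\mathrm{d}w\ge(2\pi)^{2s}(M_s-\epsilon)\|f\|_2^2,
\]
and letting $\epsilon\to0$ shows that no constant smaller than $(2\pi)^s\sqrt{M_s}$ can serve in \eqref{pap7eqn3.2}.

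The one point requiring care — and the reason the class $\mathcal{A}^r$ is defined as it is — is the legitimacy of the second step: condition $(ii)$ makes the folded integral $\int_0^1\sum_{l}(w+l)^{2s}|\widehat\phi(w+l)|^2|m_f(w)|^2\,\mathrm{d}w$ finite, so that $f^{(s)}\in L^2(\mathbb{R})$ and $\widehat{f^{(s)}}=(2\pi i\,\cdot\,)^s\widehat f$, while condition $(i)$ (the decay of $\phi,\dots,\phi^{(r)}$, which also gives $\mathcal{A}^r\subseteq W(C,\ell^1)$ and hence the reproducing-kernel property) permits differentiating the series $\sum_n c_n T_n\phi$ term by term and identifying the resulting $L^2$ function with the classical derivative $f^{(s)}$; the interchange of sum and integral in the folding is then justified by nonnegativity. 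Everything else is routine bookkeeping with Plancherel and periodization.
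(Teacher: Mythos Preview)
Your proposal is correct and follows essentially the same route as the paper: write $f\in V(\phi)$ via a $1$-periodic multiplier $m_f$, apply Plancherel to $\widehat{f^{(s)}}=(2\pi i\,\cdot\,)^s m_f\widehat\phi$, fold the integral over $[0,1]$ using periodicity, and bound the resulting weight $B_s$ by its essential supremum $M_s$. The only cosmetic difference is that for sharpness the paper simply cites the argument of Theorem~1 in \cite{BaZo}, whereas you supply the standard level-set construction $m_f=\chi_{E_\epsilon}$ explicitly; this is precisely the kind of argument the citation points to, so nothing is genuinely different.
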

\begin{pf}
Let $f\in V(\phi)$. Then $f(x)=\ds\sum\limits_{k\in\mathbb{Z}}c_k\phi(x-k)$, $(c_k)\in\ell^2(\mathbb{Z})$. We can easily prove that
$$f^{(s)}(x)=\ds\sum\limits_{k\in\mathbb{Z}}c_k\phi^{(s)}(x-k), ~~ s=1,2,\dots,r,$$
using the property $(i)$ of $\mathcal{A}^r$.
If $m_f(w)=\ds\sum\limits_{k\in\mathbb{Z}}c_ke^{-2\pi ikw}$,
then it follows from Plancherel identity that
\begin{eqnarray}
\|f^{(s)}\|_2^2&=&\|\widehat{f^{(s)}}\|^2_2=\big\|\sum\limits_{k\in\mathbb{Z}}c_k\widehat{\phi^{(s)}}(\cdot-k)\big\|^2_2\nonumber\\
&=&\int\limits_{-\infty}^{\infty}\Big|\sum\limits_{k\in\mathbb{Z}}(2\pi i)^{s}c_kw^s\widehat{\phi}(w)e^{-2\pi ikw}\Big|^2~\mathrm{d}w\nonumber\\
&=&(2\pi)^{2s}\int\limits_{0}^{1}|m_f(w)|^2\sum\limits_{l\in\mathbb{Z}}(w+l)^{2s}|\widehat{\phi}(w+l)|^2~\mathrm{d}w\nonumber\\
&=&(2\pi)^{2s}\int\limits_{0}^{1}B_{s}(w)|m_f(w)|^2\sum\limits_{l\in\mathbb{Z}}|\widehat{\phi}(w+l)|^2~\mathrm{d}w\nonumber\\
&\leq&(2\pi)^{2s}M_{s}\|f\|_2^2.\nonumber
\end{eqnarray}
The constant $M_s$ cannot be improved. In fact, the proof follows similar lines as in the proof of Theorem $1$ in \cite{BaZo}.
\end{pf}

Let $X=\{x_i:i\in\mathbb{Z}\}$, $\dots<x_{i-1}<x_i<x_{i+1}<\dots$, be a sampling set of density $\delta_\nu$.
If $X$ is separated by a constant $\gamma>0$, then
\begin{eqnarray}\label{pap7eqn3.3}
 x_{i+1}-x_i&=&x_{i+1}-x_i-x_{i+\nu}+x_{i+\nu}
 \leq\delta_\nu-[x_{i+\nu}-x_{i+1}]\nonumber\\
 &=&\delta_\nu-[x_{i+\nu}-x_{i+\nu-1}+x_{i+\nu-1}-x_{i+1}]\nonumber\\
 &\leq&\delta_\nu-\gamma-[x_{i+\nu-1}-x_{i+1}]\nonumber\\
 &\vdots&\nonumber\\
 &\leq&\delta_\nu-(\nu-1)\gamma.
\end{eqnarray}
We now define the weights
\begin{eqnarray}\label{pap7eqn3.4}
c_{i,l}&:=&\int\limits_{x_i}^{x_{i+1}}\dfrac{(x-x_{i+1})^{2l}}{l!^2}~\mathrm{d}x
=\dfrac{(x_{i+1}-x_i)^{2l+1}}{(2l+1)l!^2}\nonumber\\
&\leq&\dfrac{[\delta_\nu-(\nu-1)\gamma]^{2l+1}}{(2l+1)l!^2}
\leq[\delta_\nu-(\nu-1)\gamma]^{2l+1},~~i\in\mathbb{Z},~l\in \mathbb{N}\cup\{0\}.
\end{eqnarray}
Let $H_{2k-1}(x_i,x_{i+1},f;\cdot)$ denote the Hermite interpolation of $f$ in the interval $[x_i,x_{i+1}]$. Then 
it is proved in \cite{AntoRad2} that  
\begin{eqnarray}\label{pap7eqn3.5}
I_1:=\ds\int\limits_{x_i}^{x_{i+1}}|A_{0l}(x)|^2~\mathrm{d}x\leq c_{i,l}\left[\ds\sum\limits_{s=0}^{k-1}{k+s-1\choose s}\right]^2,
\end{eqnarray}
and
\begin{eqnarray}\label{pap7eqn3.6}
I_2:=\ds\int\limits_{x_i}^{x_{i+1}}|A_{1l}(x)|^2~\mathrm{d}x\leq c_{i,l}\left[\ds\sum\limits_{s=0}^{k-1}{k+s-1\choose s}\right]^2.
\end{eqnarray}
Consider  the operator $P:L^2(\mathbb{R})\to V(\phi)$ by
\begin{eqnarray}
(Pf)(x):=\langle f, \mathcal{K}_x\rangle,
\end{eqnarray}
where $\mathcal{K}_x(t)$ denotes the reproducing kernel of $V(\phi)$. Then $P$ is an orthogonal projection of $L^2(\mathbb{R})$ onto $V(\phi)$.
Now assume that $f$ and its first $k-1$ derivatives $f', \dots,f^{(k-1)}$ are sampled at a sequence $\{x_i:{i\in\mathbb{Z}}\}$.
We now introduce an approximation operator $T$ on $V(\phi)$ by
\begin{align*}
Tf:=P\left(\sum\limits_{i\in\mathbb{Z}}\left[\sum\limits_{j=1}^{\nu}H_{2k-1}(x_{\nu i+j-1},x_{\nu i+j},f;\cdot)\chi_{[x_{\nu i+j-1},x_{\nu i+j})}\right]\chi_{[x_{\nu i},x_{\nu i+\nu})}\right).
\end{align*}
Since $H_{2k-1}(\xi,\eta,\alpha f+g;x)=\alpha H_{2r+1}(\xi,\eta,f;x)+H_{2r+1}(\xi,\eta,g;x)$ for $\alpha\in\mathbb{C}$, the operator $T$ is linear.
Further,  
\begin{eqnarray}
\|T f\|_2^2
&\leq&\left\|\sum\limits_{i\in\mathbb{Z}}\left[\sum\limits_{j=1}^{\nu}H_{2k-1}(x_{\nu i+j-1},x_{\nu i+j},f;\cdot)\chi_{[x_{\nu i+j-1},x_{\nu i+j})}\right]\chi_{[x_{\nu i},x_{\nu i+\nu})}\right\|_2^2\nonumber\\
&=&\ds\int\limits_{\mathbb{R}}\left|\sum\limits_{i\in\mathbb{Z}}\left[\sum\limits_{j=1}^{\nu}H_{2k-1}(x_{\nu i+j-1},x_{\nu i+j},f;x)\chi_{[x_{\nu i+j-1},x_{\nu i+j})}(x)\right]\chi_{[x_{\nu i},x_{\nu i+\nu})}(x)\right|^2~\mathrm{d}x.\nonumber
\end{eqnarray}
Since the characteristic functions $\chi_{[x_{\nu i},x_{\nu i+\nu})}$, $i\in\mathbb{Z}$, have mutually disjoint support, it can be easily shown that
\begin{eqnarray}\label{pap7eqn3.8}
\|T f\|_2^2
&\leq&\ds\sum\limits_{i\in\mathbb{Z}}\int\limits_{x_{\nu i}}^{x_{\nu i+\nu}}\left|\sum\limits_{j=1}^{\nu}H_{2k-1}(x_{\nu i+j-1},x_{\nu i+j},f;x)\chi_{[x_{\nu i+j-1},x_{\nu i+j})}(x)\right|^2~\mathrm{d}x.
\end{eqnarray}
Since $x_{\nu i}<x_{\nu i+1}<\cdots<x_{\nu  i+\nu}$ and the characteristic functions $\chi_{[x_{\nu i+j-1},x_{\nu i+j})}$, $j=1,2,\dots,\nu$, have mutually disjoint support, \eqref{pap7eqn3.8} becomes
\begin{eqnarray}\label{pap7eqn3.9}
\|T f\|_2^2
&\leq&\ds\sum\limits_{i\in\mathbb{Z}}\sum\limits_{l=1}^\nu\int\limits_{x_{\nu i+l-1}}^{x_{\nu i+l}}\left|\sum\limits_{j=1}^{\nu}H_{2k-1}(x_{\nu i+j-1},x_{\nu i+j},f;x)\chi_{[x_{\nu i+j-1},x_{\nu i+j})}(x)\right|^2~\mathrm{d}x\nonumber\\
&=&\ds\sum\limits_{i\in\mathbb{Z}}\sum\limits_{j=1}^{\nu}\int\limits_{x_{\nu i+j-1}}^{x_{\nu i+j}}\big|H_{2k-1}(x_{\nu i+j-1},x_{\nu i+j},f;x)\big|^2~\mathrm{d}x\nonumber\\
&=&\ds\sum\limits_{j=1}^{\nu}\sum\limits_{i\in\mathbb{Z}}\int\limits_{x_{\nu i+j-1}}^{x_{\nu i+j}}\big|H_{2k-1}(x_{\nu i+j-1},x_{\nu i+j},f;x)\big|^2~\mathrm{d}x\nonumber\\
&=&\ds\sum\limits_{j=1}^{\nu}\ds\sum\limits_{i\in\mathbb{Z}}\int\limits_{x_{\nu i+j-1}}^{x_{\nu i+j}}
\left|\ds\sum\limits_{l=0}^{k-1}A_{0l}(x)f^{(l)}(x_{\nu i+j-1})+\ds\sum\limits_{l=0}^{k-1}A_{1l}(x)f^{(l)}(x_{\nu i+j})\right|^2~\mathrm{d}x\nonumber\\
&\leq&2\ds\sum\limits_{j=1}^{\nu}\left\{\ds\sum\limits_{i\in\mathbb{Z}}\int\limits_{x_{\nu i+j-1}}^{x_{\nu i+j}}
\left|\ds\sum\limits_{l=0}^{k-1}A_{0l}(x)f^{(l)}(x_{\nu i+j-1})\right|^2+\left|\ds\sum\limits_{l=0}^{k-1}A_{1l}(x)f^{(l)}(x_{\nu i+j})\right|^2~\mathrm{d}x\right\}\nonumber\\
&\leq&2k\sum\limits_{j=1}^{\nu}\left\{\ds\sum\limits_{i\in\mathbb{Z}}\int\limits_{x_{\nu i+j-1}}^{x_{\nu i+j}}
\ds\sum\limits_{l=0}^{k-1}|A_{0l}(x)f^{(l)}(x_{\nu i+j-1})|^2+\ds\sum\limits_{l=0}^{k-1}|A_{1l}(x)f^{(l)}(x_{\nu i+j})|^2\mathrm{d}x\right\}\nonumber\\
&=&2k\sum\limits_{j=1}^{\nu}\ds\sum\limits_{i\in\mathbb{Z}}
\ds\sum\limits_{l=0}^{k-1}|f^{(l)}(x_{\nu i+j-1})|^2\int\limits_{x_{\nu i+j-1}}^{x_{\nu i+j}}|A_{0l}(x)|^2\mathrm{d}x\nonumber\\
&&\hspace*{3cm}+2k\sum\limits_{j=1}^{\nu}\ds\sum\limits_{i\in\mathbb{Z}}\ds\sum\limits_{l=0}^{k-1}|f^{(l)}(x_{\nu i+j})|^2\int\limits_{x_{\nu i+j-1}}^{x_{\nu i+j}}|A_{1l}(x)|^2\mathrm{d}x\nonumber\\
&\leq&2kC(k)\sum\limits_{j=1}^{\nu}\left\{\ds\sum\limits_{i\in\mathbb{Z}}
\ds\sum\limits_{l=0}^{k-1}|f^{(l)}(x_{\nu i+j-1})|^2 c_{\nu i+j}
+\ds\sum\limits_{i\in\mathbb{Z}}\ds\sum\limits_{l=0}^{k-1}|f^{(l)}(x_{\nu i+j})|^2 c_{\nu i+j}\right\},\nonumber\\
\end{eqnarray}
using inequalities \eqref{pap7eqn3.5} and \eqref{pap7eqn3.6}, where $C(k)=\left[\ds\sum\limits_{s=0}^{k-1}{k+s-1\choose s}\right]^2$. Notice that it follows from \eqref{pap7eqn3.4} that
\begin{eqnarray}
\sum\limits_{j=1}^{\nu}\ds\sum\limits_{i\in\mathbb{Z}}
\ds\sum\limits_{l=0}^{k-1}|f^{(l)}(x_{\nu i+j-1})|^2 c_{\nu i+j}&=&\ds\sum\limits_{l=0}^{k-1}\sum\limits_{j=1}^{\nu}\ds\sum\limits_{i\in\mathbb{Z}}
|f^{(l)}(x_{\nu i+j-1})|^2 c_{\nu i+j}\nonumber\\
&\leq&\ds\sum\limits_{l=0}^{k-1}[\delta_\nu-(\nu-1)\gamma]^{2l+1}\sum\limits_{j=1}^{\nu}\ds\sum\limits_{i\in\mathbb{Z}}
|f^{(l)}(x_{\nu i+j-1})|^2 \nonumber\\
&=&C_{\nu,k}\ds\sum\limits_{l=0}^{k-1}\ds\sum\limits_{i\in\mathbb{Z}}
|f^{(l)}(x_{i})|^2,
\end{eqnarray}
where
$C_{\nu,k}=\max\{[\delta_\nu-(\nu-1)\gamma]^{2l+1}:l=0,\dots,k-1\}$. Similarly, we can show that
\begin{eqnarray}
\sum\limits_{j=1}^{\nu}\ds\sum\limits_{i\in\mathbb{Z}}
\ds\sum\limits_{l=0}^{k-1}|f^{(l)}(x_{\nu i+j})|^2 c_{\nu i+j}
&\leq&\ds\sum\limits_{l=0}^{k-1}[\delta_\nu-(\nu-1)\gamma]^{2l+1}\sum\limits_{j=1}^{\nu}\ds\sum\limits_{i\in\mathbb{Z}}
|f^{(l)}(x_{\nu i+j})|^2 \nonumber\\
&\leq&2C_{\nu,k}\ds\sum\limits_{l=0}^{k-1}\ds\sum\limits_{i\in\mathbb{Z}}
|f^{(l)}(x_{i})|^2.
\end{eqnarray}
Therefore, \eqref{pap7eqn3.9}  becomes
\begin{eqnarray}
\|T f\|_2^2\leq6kC(k)C_{\nu,k}\ds\sum\limits_{l=0}^{k-1}\ds\sum\limits_{i\in\mathbb{Z}}|f^{(l)}(x_{i})|^2.
\end{eqnarray}
Since $X$ is separated by the constant $\gamma>0$, there are at most $[1/\gamma]+1$ sampling points in every interval $[i,i+1]$. Consequently,  it follows from \cite{AlGr1}  that 
\begin{eqnarray}\label{pap7eqn3.13}
\|T f\|_2^2&\leq&6kC(k)C_{\nu,k}\ds\sum\limits_{l=0}^{k-1}\ds\sum\limits_{i\in\mathbb{Z}}|f^{(l)}(x_{i})|^2\\
&\leq&6kC(k)C_{\nu,k}(\gamma^{-1}+1)\sum\limits_{l=0}^{k-1}\|f^{l}\|_W^2\nonumber\\
&\leq&6kC(k)C_{\nu,k}(\gamma^{-1}+1)\|G_\phi\|_0^{-2}(\gamma^{-1}+1)\|f\|_2^2\label{pap7eqn3.14}\\
&=&C'_{\nu,k}\|f\|_2^2,\nonumber
\end{eqnarray}
where $C'_{\nu,k}=6kC(k)C_{\nu,k}(\gamma^{-1}+1)\|G_\phi\|_0^{-2}(\gamma^{-1}+1)>0$. Therefore $T$ is a bounded linear operator on $V(\phi)$. We now prove our main result.
\begin{thm}\label{pap3thm2.2}
Let $\phi\in\mathcal{A}^{2k}$ be such that $\{T_{n}\phi:n\in\mathbb{Z}\}$ forms a Riesz basis for $V(\phi)$. Assume that $\{x_i:{i\in\mathbb{Z}}\}$ is separated by a constant $\gamma>0$.
If there exists a positive  integer $\nu$ such that 
$$\sup\limits_i(x_{i+\nu}-x_i)=\delta_\nu<\dfrac{\nu}{2\pi}\left(\dfrac{c_{k}^2}{M_{2k}}\right)^{\frac{1}{4k}},$$ then 
any $f\in V(\phi)$ can be reconstructed from the sample values $\{f^{(j)}(x_i):j=0,1,\dots,k-1,i\in\mathbb{Z}\}$ using the following iteration algorithm.
Set
\begin{eqnarray*}
f_0&=&Tf=P\left(\sum\limits_{i\in\mathbb{Z}}\left[\sum\limits_{j=1}^{\nu}H_{2k-1}(x_{\nu i+j-1},x_{\nu i+j},f;\cdot)\chi_{[x_{\nu i+j-1},x_{\nu i+j})}\right]\chi_{[x_{\nu i},x_{\nu i+\nu})}\right),\\
f_{n+1}&=&f_n+T(f-f_n),~ n\geq 0,
\end{eqnarray*}
where $H_{2k-1}(x_i,x_{i+1},f;\cdot)$ denotes the Hermite interpolation of $f$ in the interval $[x_i,x_{i+1}]$.
Then we have $\lim\limits_{n\to\infty}f_n=f$. The error estimate after $n$ iterations becomes
\begin{eqnarray*}
\|f-f_n\|_2&\leq&\left(\dfrac{[\delta_\nu-(\nu-1) \gamma]^{2k}}{c_{k}}(2\pi)^{2k}\sqrt{M_{2k}}\right)^{n+1}\|f\|_2.
\end{eqnarray*}
\end{thm}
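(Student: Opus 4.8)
The plan is to invoke Proposition~\ref{pap2prop2.1} with the Hilbert space $V(\phi)$ and the operator $A=T$ constructed above. Since $T$ has already been verified to be bounded on $V(\phi)$, the whole theorem will follow once we establish the perturbation estimate $\|f-Tf\|_2\le C\|f\|_2$ for all $f\in V(\phi)$, with $C=\dfrac{[\delta_\nu-(\nu-1)\gamma]^{2k}}{c_k}(2\pi)^{2k}\sqrt{M_{2k}}$, and check that $0<C<1$. Granting this, Proposition~\ref{pap2prop2.1} delivers verbatim the iteration $f_0=Tf$, $f_{n+1}=f_n+T(f-f_n)$, its convergence $f_n\to f$, and the stated error bound $\|f-f_n\|_2\le C^{n+1}\|f\|_2$; and since $Tf$ is assembled only from the Hermite interpolants $H_{2k-1}(x_{\nu i+j-1},x_{\nu i+j},f;\cdot)$, it depends solely on the data $\{f^{(j)}(x_i):0\le j\le k-1,\ i\in\mathbb{Z}\}$, so the scheme is a genuine reconstruction from those samples.

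To obtain the perturbation estimate, I would first note that because the blocks $[x_{\nu i},x_{\nu i+\nu})$ tile $\mathbb{R}$ and inside each block the subintervals $[x_{\nu i+j-1},x_{\nu i+j})$ tile it, the argument of $P$ in the definition of $Tf$ is simply the global piecewise Hermite interpolant $g:=\sum_{m\in\mathbb{Z}}H_{2k-1}(x_m,x_{m+1},f;\cdot)\,\chi_{[x_m,x_{m+1})}$. For $f\in V(\phi)$ one has $Pf=f$, and since $P$ is an orthogonal projection, $\|f-Tf\|_2=\|P(f-g)\|_2\le\|f-g\|_2$. Hence it suffices to bound $\|f-g\|_2^2=\sum_{m\in\mathbb{Z}}\int_{x_m}^{x_{m+1}}\big|f(x)-H_{2k-1}(x_m,x_{m+1},f;x)\big|^2\,\mathrm{d}x$.

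The heart of the matter is the estimate on one interval. On $[x_m,x_{m+1}]$ set $e=f-H_{2k-1}(x_m,x_{m+1},f;\cdot)$; the Hermite interpolation conditions give $e^{(l)}(x_m)=e^{(l)}(x_{m+1})=0$ for $0\le l\le k-1$, and since $H_{2k-1}$ is a polynomial of degree at most $2k-1$ we have $e^{(2k)}=f^{(2k)}$, which is legitimate because $\phi\in\mathcal{A}^{2k}$ forces $f\in C^{2k}$. Applying the strengthened Wirtinger--Sobolev inequality~\eqref{pap7eqn2.6a} with $r=k$ then yields $\int_{x_m}^{x_{m+1}}|e|^2\le c_k^{-2}(x_{m+1}-x_m)^{4k}\int_{x_m}^{x_{m+1}}|f^{(2k)}|^2$. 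Using the gap estimate $x_{m+1}-x_m\le\delta_\nu-(\nu-1)\gamma$ from~\eqref{pap7eqn3.3} and summing over $m$ gives $\|f-g\|_2^2\le c_k^{-2}[\delta_\nu-(\nu-1)\gamma]^{4k}\|f^{(2k)}\|_2^2$; finally the Bernstein-type inequality of Theorem~\ref{pap4thm2.3} with $s=2k$ converts this into $\|f-Tf\|_2\le C\|f\|_2$ with $C$ as above.

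It remains to check $C<1$, i.e.\ $[\delta_\nu-(\nu-1)\gamma]^{2k}(2\pi)^{2k}\sqrt{M_{2k}}<c_k$; this is precisely where the density hypothesis $\delta_\nu<\tfrac{\nu}{2\pi}(c_k^2/M_{2k})^{1/4k}$, together with the separation of $\{x_i\}$ entering through~\eqref{pap7eqn3.3}, gets used. I expect this bookkeeping --- making the constant in the single-interval Wirtinger estimate sharp enough that the given threshold on $\delta_\nu$ actually forces $C<1$ --- to be the main obstacle; the rest of the chain is routine once one spots the degree argument that kills $H_{2k-1}^{(2k)}$ and so lets the Bernstein inequality absorb $\|f^{(2k)}\|_2$.
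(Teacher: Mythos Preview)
Your proposal is correct and follows essentially the same route as the paper: bound $\|f-Tf\|_2$ by $\|f-g\|_2$ via the projection, apply the strengthened Wirtinger--Sobolev inequality~\eqref{pap7eqn2.6a} intervalwise to the Hermite remainder (using that $H_{2k-1}^{(2k)}\equiv 0$), insert the gap estimate~\eqref{pap7eqn3.3}, invoke the Bernstein inequality of Theorem~\ref{pap4thm2.3}, and finish with Proposition~\ref{pap2prop2.1}. Your collapse of the double sum over $(i,j)$ to a single sum over consecutive intervals is a harmless notational simplification; the paper keeps the $(i,j)$ indexing but arrives at the identical bound~\eqref{pap7eqn3.17}, and then passes from $\delta_\nu-(\nu-1)\gamma$ to $\delta_\nu/\nu$ to read off $C<1$ from the hypothesis on~$\delta_\nu$---precisely the bookkeeping step you flagged.
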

\begin{proof}
Since  $f=Pf=P\left(\ds\sum\limits_{i\in\mathbb{Z}}f\chi_{[x_{\nu i},x_{\nu i+\nu})}\right)$, and the characteristic functions $\chi_{[x_{\nu i},x_{\nu i+\nu})}$, $i\in\mathbb{Z}$, have mutually disjoint support,
it can be easily shown that
\begin{eqnarray}\label{pap7eqn3.15}
\|f-T f\|_2^2
&=&\ds\sum\limits_{i\in\mathbb{Z}}\int\limits_{x_{\nu i}}^{x_{\nu i+\nu}}\left|\left[f-\sum\limits_{j=1}^{\nu}H_{2k-1}(x_{\nu i+j-1},x_{\nu i+l},f;x)\right]\chi_{[x_{\nu i+j-1},x_{\nu i+j})}(x)\right|^2~\mathrm{d}x\nonumber\\
&=&\ds\sum\limits_{i\in\mathbb{Z}}\sum\limits_{j=1}^\nu\int\limits_{x_{\nu i+j-1}}^{x_{\nu i+j}}\left|\left[f-\sum\limits_{j=1}^{\nu}H_{2k-1}(x_{\nu i+j-1},x_{\nu i+j},f;x)\right]\chi_{[x_{\nu i+j-1},x_{\nu i+j})}(x)\right|^2~\mathrm{d}x\nonumber\\
&=&\ds\sum\limits_{i\in\mathbb{Z}}\sum\limits_{j=1}^{\nu}\int\limits_{x_{\nu i+j-1}}^{x_{\nu i+j}}\big|f(x)-H_{2k-1}(x_{\nu i+j-1},x_{\nu i+j},f;x)\big|^2~\mathrm{d}x.
\end{eqnarray}
Now applying Wirtinger-Sobolev inequality with the bound  given in \eqref{pap7eqn2.6a}, \eqref{pap7eqn3.15} becomes
\begin{eqnarray}
\|f-Tf\|_2^2
&\leq&\ds\sum\limits_{i\in\mathbb{Z}}\sum\limits_{j=1}^{\nu}\dfrac{(x_{\nu i+j}-x_{\nu i+j-1})^{4k}}{c_{k}^2}
\int\limits_{x_{\nu i+j-1}}^{x_{\nu i+j}}|f^{(2k)}(x)|^2~\mathrm{d}x.
\end{eqnarray}
Since $x_{i+1}-x_i\leq\delta_\nu-(\nu-1)\gamma$ for every $i\in\mathbb{Z}$, we get
\begin{eqnarray}\label{pap7eqn3.17}
\|f-Tf\|_2^2
&\leq&\ds\sum\limits_{i\in\mathbb{Z}}\dfrac{(\delta_\nu-(\nu-1)\gamma)^{4k}}{c_{k}^2}
\int\limits_{x_{\nu i}}^{x_{\nu i+\nu}}|f^{(2k)}(x)|^2~\mathrm{d}x\nonumber\\
&=&\dfrac{[\delta_\nu-(\nu-1)\gamma]^{4k}}{c_{k}^2}\|f^{(2k)}\|_2^2\nonumber\\
&\leq&\dfrac{[(\nu-1) \gamma-\delta_\nu]^{4k}}{c_{k}^2}(2\pi)^{4k}M_{2k}\|f\|_2^2,
\end{eqnarray}
using Bernstein's inequality \eqref{pap7eqn3.2}.
Since $\sup\limits_i(x_{i+\nu}-x_i)=\delta_\nu$ and $\inf\limits_{i\neq j} |x_i-x_j|=\gamma$, we have $\gamma\leq\dfrac{\delta_\nu}{\nu}$. Hence, \eqref{pap7eqn3.17} becomes
\begin{eqnarray}
\|f-Tf\|_2^2
&\leq&\dfrac{\left[\delta_\nu/\nu\right]^{4k}}{c_{k}^2}(2\pi)^{4k}M_{2k}\|f\|_2^2,
\end{eqnarray}
If $\delta_\nu<\dfrac{\nu}{2\pi}\left(\dfrac{c_{k}^2}{M_{2k}}\right)^{\frac{1}{4k}}$, then $\dfrac{\left(\delta_\nu/\nu\right)^{4k}}{c_{k}^2}(2\pi)^{4k}M_{2k}<1$. Therefore, the operator $T$ is invertible on $V(\phi)$.
Consequently, the proposed reconstruction algorthim follows from Proposition \ref{pap2prop2.1} and  we obtain the error estimate from \eqref{pap7eqn3.17}.
\end{proof}
\begin{cor}\label{pap7cor3.1}
Let $\phi\in\mathcal{A}^{2k}$ be such that $\{T_{n}\phi:n\in\mathbb{Z}\}$ forms a Riesz basis for $V(\phi)$. Assume that $\{x_i:{i\in\mathbb{Z}}\}$ is separated by a constant $\gamma>0$.
If 
$$\sup\limits_i(x_{i+\nu}-x_i)=\delta_\nu<\dfrac{\nu}{2\pi}\left(\dfrac{c_{k}^2}{M_{2k}}\right)^{\frac{1}{4k}},~~\text{for some~}\nu\in\mathbb{N,}$$ 
then there exist positive constants 
$A_{\nu,k}$  and $B_{\nu,k}$ such that
\begin{eqnarray}\label{pap3eqn2.7}
A_{\nu,k}\| f\|_2^2&\leq&\ds\sum\limits_{i\in\mathbb{Z}}
\ds\sum\limits_{l=0}^{k-1}|f^{(l)}(x_i)|^2\leq B_{\nu,k}\| f\|_2^2,
\end{eqnarray}
every $f\in V(\phi)$.
\end{cor}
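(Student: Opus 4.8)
The plan is to read off both frame bounds from the estimates already assembled in the proof of Theorem \ref{pap3thm2.2}; no new ingredient is needed, and the two halves use disjoint parts of that proof.

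\emph{Upper bound.} For this half the density condition is not even needed: it suffices that $X$ be $\gamma$-separated, that $\phi\in\mathcal{A}^{2k}$, and that $\{T_n\phi:n\in\mathbb{Z}\}$ be a Riesz basis. Since $X$ is separated by $\gamma$, each unit interval $[n,n+1)$ contains at most $\gamma^{-1}+1$ of the points $x_i$; grouping the sample points by unit intervals and using $\sum_n a_n^2\leq\big(\sum_n a_n\big)^2$ for nonnegative $a_n$ gives, for every $f\in V(\phi)$,
\begin{equation*}
\sum_{l=0}^{k-1}\sum_{i\in\mathbb{Z}}|f^{(l)}(x_i)|^2\leq(\gamma^{-1}+1)\sum_{l=0}^{k-1}\|f^{(l)}\|_W^2 .
\end{equation*}
This is precisely the first step of the chain \eqref{pap7eqn3.13}--\eqref{pap7eqn3.14}, whose continuation there (the Wiener-amalgam estimate of \cite{AlGr1}, using the Riesz lower bound $\|G_\phi\|_0$) bounds the right-hand side by $(\gamma^{-1}+1)^2\|G_\phi\|_0^{-2}\|f\|_2^2$. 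Hence one may take $B_{\nu,k}=(\gamma^{-1}+1)^2\|G_\phi\|_0^{-2}$.

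\emph{Lower bound.} Here the density hypothesis enters through Theorem \ref{pap3thm2.2} which, together with Proposition \ref{pap2prop2.1}, asserts that the approximation operator $T$ is invertible on $V(\phi)$: fix a constant $C\in(0,1)$ with $\|f-Tf\|_2\leq C\|f\|_2$ for all $f\in V(\phi)$, as established there. The Neumann series $T^{-1}=\sum_{n\geq0}(I-T)^n$ then gives $\|T^{-1}\|_{\mathrm{op}}\leq(1-C)^{-1}$, so for every $f\in V(\phi)$,
\begin{equation*}
\|f\|_2=\|T^{-1}(Tf)\|_2\leq(1-C)^{-1}\|Tf\|_2 .
\end{equation*}
Squaring and inserting the inequality \eqref{pap7eqn3.13}, namely $\|Tf\|_2^2\leq 6kC(k)C_{\nu,k}\sum_{l=0}^{k-1}\sum_{i\in\mathbb{Z}}|f^{(l)}(x_i)|^2$ with $C(k)$ and $C_{\nu,k}$ as defined there, yields
\begin{equation*}
\|f\|_2^2\leq\frac{6kC(k)C_{\nu,k}}{(1-C)^2}\sum_{l=0}^{k-1}\sum_{i\in\mathbb{Z}}|f^{(l)}(x_i)|^2 ,
\end{equation*}
so $A_{\nu,k}=\dfrac{(1-C)^2}{6kC(k)C_{\nu,k}}$ works; the two displays together give the asserted frame inequality.

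There is no genuine obstacle in this argument — it is essentially bookkeeping on top of Theorem \ref{pap3thm2.2}. The only point that requires care is to keep the two uses of the multi-line estimate \eqref{pap7eqn3.13} cleanly separated: its purely interpolation-theoretic first line is what links $\|Tf\|_2$ to the sampled data and drives the lower bound, while its amalgam continuation gives the upper bound; and one must make sure the constant $C$ invoked in the lower-bound step is exactly the contraction constant for which invertibility of $T$ was proved, so that the Neumann series converges.
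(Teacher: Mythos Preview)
Your proof is correct and follows essentially the same approach as the paper: the upper bound is read off from the chain \eqref{pap7eqn3.13}--\eqref{pap7eqn3.14}, and the lower bound is obtained from the invertibility of $T$ (via the Neumann series bound $\|T^{-1}\|\leq(1-\|I-T\|)^{-1}$) combined with the first line of \eqref{pap7eqn3.13}. The only cosmetic difference is that you extract the direct estimate on the sample sum for the upper bound, whereas the paper simply cites the two displays; the substance is identical.
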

\begin{proof}
The existence of $B_{\nu,k}$ follows from \eqref{pap7eqn3.13} and \eqref{pap7eqn3.14}.
Recall that $$Tf=P\left(\sum\limits_{i\in\mathbb{Z}}\left[\sum\limits_{j=1}^{\nu}H_{2k-1}(x_{\nu i+j-1},x_{\nu i+j},f;\cdot)\chi_{[x_{\nu i+j-1},x_{\nu i+j})}\right]\chi_{[x_{\nu i},x_{\nu i+\nu})}\right).$$ 
We now estimate the lower bound of $\ds\sum\limits_{i\in\mathbb{Z}}\ds\sum\limits_{l=0}^{k-1}|f^{(l)}(x_i)|^2$. Since the operator $T$ is invertible on $V(\phi)$, we have
\begin{eqnarray}\label{pap3eqn2.8}
\|f\|_2^2&=&\|T^{-1}Tf\|_2^2\nonumber\\
&\leq&\|T^{-1}\|^2\|Tf\|_2^2\nonumber\\
&\leq&(1-\|I-T\|)^{-2}\|Tf\|_2^2\nonumber\\
&\leq&\left(1-\dfrac{[\delta_\nu-(\nu-1) \gamma]^{2k}}{c_{k}}(2\pi)^{2k}\sqrt{M_{2k}}\right)^{-2}\|Tf\|_2^2\nonumber\\
&\leq&\left(1-\dfrac{[\delta_\nu-(\nu-1) \gamma]^{2k}}{c_{k}}(2\pi)^{2k}\sqrt{M_{2k}}\right)^{-2}6kC(k)C_{\nu,k}\ds\sum\limits_{l=0}^{k-1}\ds\sum\limits_{i\in\mathbb{Z}}|f^{(l)}(x_{i})|^2,\nonumber
\end{eqnarray}
using inequality \eqref{pap7eqn3.13}, from which the existence of $A_{\nu,k}$ follows.
\end{proof}

\subsection{Sampling Density in the Space of Bandlimited Functions}
The classical Bernstein inequality states that for every $f\in \mathcal{B}_\sigma$,
\begin{eqnarray}
\|f^{(k)}\|_2\leq \sigma^k\|f\|_2.
\end{eqnarray}
It is clear that the space $\mathcal{B}_\pi$ coincides with the shift-invariant space $V(\phi)$ generated by the function $\phi(x)=\dfrac{\sin\pi x}{\pi x}$.
If $f\in \mathcal{B}_\sigma$, then the function $g(x)=f\left(\dfrac{\pi x}{\sigma}\right)\in \mathcal{B}_\pi$. Consequently, we obtain the following  result from Corollary \ref{pap7cor3.1}.
\begin{thm}\label{pap7thm3.3}
Let $\{x_i:{i\in\mathbb{Z}}\}$ be separated by a constant $\gamma>0$.
If  
$$\sup\limits_i(x_{i+\nu}-x_i)=\delta_\nu<\dfrac{\nu}{\sigma}c_k^{1/2k},~ \text{for some~} \nu\in\mathbb{N},$$
then  there exist positive constants 
$A_{\nu,k}$  and $B_{\nu,k}$ such that
\begin{eqnarray}\label{pap3eqn2.7}
A_{\nu,k}\| f\|_2^2&\leq&\ds\sum\limits_{i\in\mathbb{Z}}
\ds\sum\limits_{l=0}^{k-1}|f^{(l)}(x_i)|^2\leq B_{\nu,k}\| f\|_2^2,
\end{eqnarray}
for every $f\in \mathcal{B}_\sigma$.
\end{thm}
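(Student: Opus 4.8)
The plan is to deduce Theorem \ref{pap7thm3.3} from Corollary \ref{pap7cor3.1} applied to the sinc generator $\phi(x)=\sin\pi x/\pi x$, after first pinning down the relevant constant $M_{2k}$ for this $\phi$ and then transporting the statement from $\mathcal{B}_\pi=V(\phi)$ to $\mathcal{B}_\sigma$ by a dilation. Throughout, recall that $\widehat{\phi}=\chi_{[-1/2,1/2]}$.

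First I would check that $\phi$ meets the hypotheses of Corollary \ref{pap7cor3.1} for every $k\in\mathbb{N}$. The function $\phi$ is $C^\infty$ and each derivative $\phi^{(s)}$ decays like $\mathcal{O}(|x|^{-1})=\mathcal{O}(|x|^{-1/2-\epsilon})$ with $\epsilon=1/2$, so condition $(i)$ of $\mathcal{A}^{2k}$ holds; condition $(ii)$ is immediate because $\widehat{\phi}$ is supported in $[-1/2,1/2]$, so for $w\in[0,1]$ the sum $\sum_{l}(w+l)^{2s}|\widehat{\phi}(w+l)|^2$ has at most one nonzero term, bounded by $(1/2)^{2s}$. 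Moreover $G_\phi(w)=\sum_l|\widehat{\phi}(w+l)|^2=1$ a.e., so $\{T_n\phi\}$ is an orthonormal (hence Riesz) basis for $V(\phi)$, and $V(\phi)=\mathcal{B}_\pi$ is a reproducing kernel Hilbert space. Next I would compute $B_s$ and $M_s$: for $w\in[0,1/2)$ only the term $l=0$ contributes, giving $B_s(w)=w^{2s}$, while for $w\in(1/2,1]$ only $l=-1$ contributes, giving $B_s(w)=(1-w)^{2s}$; hence $M_s=\mathrm{esssup}_{[0,1]}B_s=(1/2)^{2s}=4^{-s}$, and in particular $M_{2k}=4^{-2k}$. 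Substituting this into the threshold of Corollary \ref{pap7cor3.1} yields $\frac{\nu}{2\pi}\bigl(c_k^2/M_{2k}\bigr)^{1/4k}=\frac{\nu}{2\pi}\bigl(c_k^2\,4^{2k}\bigr)^{1/4k}=\frac{\nu}{\pi}c_k^{1/2k}$. (As a consistency check, the Bernstein inequality \eqref{pap7eqn3.2} then reads $\|f^{(s)}\|_2\le(2\pi)^s2^{-s}\|f\|_2=\pi^s\|f\|_2$, which is the classical Bernstein inequality on $\mathcal{B}_\pi$.)

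To reach an arbitrary $\sigma>0$, I would use the dilation $g(x)=f(\pi x/\sigma)$, which maps $\mathcal{B}_\sigma$ onto $\mathcal{B}_\pi=V(\phi)$ with $\|g\|_2^2=(\sigma/\pi)\|f\|_2^2$, together with the dilated sampling set $y_i=\sigma x_i/\pi$. Then $\{y_i\}$ is separated by $\gamma\sigma/\pi>0$, and $\sup_i(y_{i+\nu}-y_i)=(\sigma/\pi)\delta_\nu<\frac{\nu}{\pi}c_k^{1/2k}$ precisely when $\delta_\nu<\frac{\nu}{\sigma}c_k^{1/2k}$, which is the hypothesis of the theorem. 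Applying Corollary \ref{pap7cor3.1} to $\phi$ and $\{y_i\}$ gives positive constants with $A_{\nu,k}\|g\|_2^2\le\sum_{i}\sum_{l=0}^{k-1}|g^{(l)}(y_i)|^2\le B_{\nu,k}\|g\|_2^2$. Finally I would unwind the dilation via $g^{(l)}(y_i)=(\pi/\sigma)^l f^{(l)}(x_i)$ and $\|g\|_2^2=(\sigma/\pi)\|f\|_2^2$; bounding the finitely many factors $(\pi/\sigma)^{2l}$, $0\le l\le k-1$, above and below by positive constants converts the frame inequality for $g$ into the asserted one for $f$ with new constants $A_{\nu,k}',B_{\nu,k}'$.

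I do not expect a genuine obstacle here. The only two places requiring care are the explicit evaluation of $M_s$ for the sinc window, where one must keep track of which translate $\widehat{\phi}(\cdot+l)$ is active on each half of $[0,1]$, and the bookkeeping of the dilation factors $(\pi/\sigma)^l$ in the last step; everything else is a direct substitution into Corollary \ref{pap7cor3.1}.
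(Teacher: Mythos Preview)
Your proposal is correct and follows essentially the same route as the paper: identify $\mathcal{B}_\pi$ with $V(\phi)$ for $\phi(x)=\sin\pi x/(\pi x)$, feed the sinc generator into Corollary \ref{pap7cor3.1} (using that the sharp Bernstein constant gives $M_{2k}=4^{-2k}$, which collapses the threshold to $\tfrac{\nu}{\pi}c_k^{1/2k}$), and then pass to $\mathcal{B}_\sigma$ via the dilation $g(x)=f(\pi x/\sigma)$. The paper is terser---it simply cites the classical Bernstein inequality and the dilation and declares the result a consequence of Corollary \ref{pap7cor3.1}---while you spell out the verification of $\phi\in\mathcal{A}^{2k}$, the computation of $M_s$, and the bookkeeping for the derivative factors $(\pi/\sigma)^l$, but the underlying argument is the same.
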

Using Bernstein's inequality and Wirtinger-Sobolev constants $c_k$, we now provide maximum gap value for certain values of $\nu$ and $k$ with $\sigma=\pi$ in Table 1.

\begin{center}
 \begin{tabular}{|l|l|l|l|l|l|l|}
  \hline
  \multicolumn{5}{|c|}{$\delta_\nu<\nu c_k^{1/2k}$}\\
  \hline
 \backslashbox{$k$}{$\nu$}  &1&2&3&4\\
  \hline
  1 &1&2&3&4\\
   \hline
  2 &1.5006&3.0112&4.5018&6.0024\\
  \hline
  3 &2&4&6&8\\
  \hline
  4 &1.9169&3.8338&5.7507&7.6676\\
  \hline
  5 &2.361&4.722&7.083&9.444\\
  \hline
  6 &2.8094&5.6188&8.4282&11.2376\\
  \hline
   7 &3.2608&6.5216&9.7824&13.0432\\
   \hline
    8 &3.7144&7.4288&11.1432&14.8576\\
  \hline
   9 &4.1697&8.3394&12.5091&16.6788\\
  \hline
   10 &4.6263&9.2526&13.8789&18.5052\\
  \hline
   20 &10.0044&20.0088&30.0132&40.0176\\
  \hline
   40 &19.5623&39.1246&58.6869&78.2492\\
  \hline
  \end{tabular}\\
\vspace*{0.3cm}
Table 1
\end{center}
Consider the sampling set $X$ whose elements are chosen in this way:
\begin{equation*}
x_i= \left\{\begin{array}{cc}
i-0.25&\mbox{~~if $i>0$,}\\
0&\mbox{~~if $i=0$,}\\
i+0.25&\mbox{~~if $i<0$.}\\
\end{array} \right.
\end{equation*}
It is well known that the above sampling set $X$ is not a stable set of sampling for $\mathcal{B}_\pi$ (see \cite{Young}).  But $\sup\limits_i(x_{i+\nu}-x_i)=\nu$, for every $\nu\in\mathbb{N}$.
Therefore, the maximum gap condition in the previous theorem is 
sharp when $k=1$. Thus we have proved Theorem \ref{pap7thm1.1} for $\mathcal{B}_\pi$.

Recall that the windowed Fourier transform of $f\in L^2(\mathbb{R})$  is defined as
\begin{eqnarray*}\label{STFT}
F_gf(x,y)=\int\limits_{-\infty}^{\infty}f(t)\overline{g(t-x)}e^{-2\pi ity}~\mathrm{d}t=\langle f, M_yT_xg \rangle,
\end{eqnarray*}
where $M_y$ is the modulation operator defined by $$M_yf(x):=e^{2\pi iyx}f(x),~~f\in L^2(\mathbb{R}).$$
We now construct a Gabor-type frame which is a generalized version of Theorem $3$ in \cite{Grochenig2} or Theorem 8.29 in \cite{FeiGr}.

\begin{thm}\label{pap2thm5.2}
Suppose that $g\in \mathcal{B}_\sigma\bigcap L^1(\mathbb{R})$ and there exist two positive constants $a,b$ such that the sequence $\{y_j\}$ satisfies
\begin{eqnarray}\label{pap4eqn2.3}
a\leq\sum\limits_{j\in\mathbb{Z}}|\widehat{g}(\xi-y_j)|^2\leq b.
\end{eqnarray}
If there exists positive integer $\nu$ such that 
$$\sup\limits_i(x_{i+\nu,j}-x_{i,j})=\delta_\nu<\dfrac{\nu}{\sigma}c_k^{1/2k},$$ 
then the collection
\begin{eqnarray*}
\Big\{M_{y_j}T_{x_{i,j}}g^{(l)}:l=0, 1, \dots,k-1, i, j\in\mathbb{Z}\Big\}
\end{eqnarray*}
forms a frame for $L^2(\mathbb{R})$.
\end{thm}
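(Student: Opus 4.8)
The plan is to transfer the problem, one modulation $y_j$ at a time, to a sampling problem in the Paley--Wiener space $\mathcal{B}_\sigma$, apply Theorem~\ref{pap7thm3.3}, and then sum over $j$ using the admissibility hypothesis \eqref{pap4eqn2.3}.

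First I would fix $j\in\mathbb{Z}$ and put $F_j(x):=F_gf(x,y_j)=\langle f,M_{y_j}T_xg\rangle$. Taking the Fourier transform of $M_{y_j}T_xg$ and substituting $\eta=\xi-y_j$ exhibits $F_j$ as the inverse Fourier transform of $\eta\mapsto\widehat{f}(\eta+y_j)\overline{\widehat{g}(\eta)}$; since $\mathrm{supp}\,\widehat{g}\subseteq[-\tfrac{\sigma}{2\pi},\tfrac{\sigma}{2\pi}]$ and this product lies in $L^1$ (Cauchy--Schwarz, using $\widehat{f}\in L^2$ and $\widehat{g}\in L^2$ with compact support), $F_j$ is a $C^\infty$ function belonging to $\mathcal{B}_\sigma$. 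Differentiating under the integral sign and using $\widehat{g^{(l)}}(\eta)=(2\pi i\eta)^l\widehat{g}(\eta)$ gives $\langle f,M_{y_j}T_xg^{(l)}\rangle=(-1)^lF_j^{(l)}(x)$ for $0\le l\le k-1$, hence
\begin{eqnarray*}
\sum_{l=0}^{k-1}\sum_{i,j\in\mathbb{Z}}\big|\langle f,M_{y_j}T_{x_{i,j}}g^{(l)}\rangle\big|^2=\sum_{j\in\mathbb{Z}}\sum_{l=0}^{k-1}\sum_{i\in\mathbb{Z}}\big|F_j^{(l)}(x_{i,j})\big|^2.
\end{eqnarray*}

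Next, for each fixed $j$ the set $\{x_{i,j}:i\in\mathbb{Z}\}$ is separated (with, say, a common constant $\gamma>0$) and satisfies $\sup_i(x_{i+\nu,j}-x_{i,j})=\delta_\nu<\tfrac{\nu}{\sigma}c_k^{1/2k}$, so Theorem~\ref{pap7thm3.3} applies to $F_j\in\mathcal{B}_\sigma$ and produces constants $A_{\nu,k},B_{\nu,k}>0$, independent of $j$ because they depend only on $\gamma$, $\nu$, $k$, $\sigma$ and $\delta_\nu$, with
\begin{eqnarray*}
A_{\nu,k}\|F_j\|_2^2\le\sum_{l=0}^{k-1}\sum_{i\in\mathbb{Z}}\big|F_j^{(l)}(x_{i,j})\big|^2\le B_{\nu,k}\|F_j\|_2^2.
\end{eqnarray*}
Then I would compute $\|F_j\|_2^2$ by the Plancherel identity in the $x$-variable: $\|F_j\|_2^2=\int|\widehat{f}(\eta+y_j)|^2|\widehat{g}(\eta)|^2\,\mathrm{d}\eta=\int|\widehat{f}(\xi)|^2|\widehat{g}(\xi-y_j)|^2\,\mathrm{d}\xi$. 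Summing over $j$, interchanging sum and integral by Tonelli, and invoking \eqref{pap4eqn2.3} gives
\begin{eqnarray*}
a\|f\|_2^2\le\sum_{j\in\mathbb{Z}}\|F_j\|_2^2=\int_{-\infty}^{\infty}|\widehat{f}(\xi)|^2\Big(\sum_{j\in\mathbb{Z}}|\widehat{g}(\xi-y_j)|^2\Big)\,\mathrm{d}\xi\le b\|f\|_2^2.
\end{eqnarray*}
Combining the three displays yields $aA_{\nu,k}\|f\|_2^2\le\sum_{l,i,j}|\langle f,M_{y_j}T_{x_{i,j}}g^{(l)}\rangle|^2\le bB_{\nu,k}\|f\|_2^2$ for every $f\in L^2(\mathbb{R})$, which is the asserted frame inequality with bounds $aA_{\nu,k}$ and $bB_{\nu,k}$.

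I expect the main obstacle to be the bookkeeping that makes Theorem~\ref{pap7thm3.3} applicable uniformly over $j$: one must justify the bandlimited reduction carefully (smoothness of $F_j$ and differentiation under the integral, which is exactly where $g\in\mathcal{B}_\sigma\cap L^1(\mathbb{R})$ enters) and confirm that the separation constant of each $\{x_{i,j}\}_i$, and hence the frame bounds $A_{\nu,k},B_{\nu,k}$ coming from Corollary~\ref{pap7cor3.1} and \eqref{pap7eqn3.13}, can be taken independent of $j$. Everything else is a routine assembly of Theorem~\ref{pap7thm3.3}, the Plancherel identity, and the admissibility condition \eqref{pap4eqn2.3}, in the spirit of Gr\"{o}chenig's correspondence between Gabor frames and sampling in shift-invariant spaces.
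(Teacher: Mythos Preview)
Your proposal is correct and follows essentially the same approach as the paper: reduce to sampling in $\mathcal{B}_\sigma$ for each fixed $j$ via the identity $F_gf(\cdot,y_j)\in\mathcal{B}_\sigma$ and $\partial_x^{l}F_gf=(-1)^lF_{g^{(l)}}f$, apply the sampling inequality (the paper cites Corollary~\ref{pap7cor3.1}, you cite its bandlimited specialization Theorem~\ref{pap7thm3.3}), compute $\|F_j\|_2^2$ by Plancherel, and sum over $j$ using \eqref{pap4eqn2.3}. Your write-up is in fact a bit more explicit than the paper's about why $F_j\in\mathcal{B}_\sigma$ and about the uniformity in $j$ of the constants $A_{\nu,k},B_{\nu,k}$.
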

\begin{pf}
Let $\widetilde{g}(t)=\overline{g(-t)}$. Then for each $y\in\mathbb{R}$, $F_gf(x,y)=M_{-y}f*\widetilde{g}(x)$ is a bandlimited function with
$\text{supp}(\widehat{M_{-y}f*\widetilde{g})}\subseteq\text{supp}(\widehat{g})=[-\sigma,\sigma]$, \textit{i.e.}, $F_g(\cdot,y)\in B_\sigma$.
Since $\dfrac{\partial^l}{\partial x^l}F_gf(x,y)=(-1)^lF_{g^{(l)}}f(x,y)$, by Corollary \ref{pap7cor3.1}, there exist 
$A_{\nu,k}$  $B_{\nu,k}>0$ such that
\begin{eqnarray}\label{pap4eqn2.4}
A_{k,\nu}\|M_{-y_j}f*\widetilde{g}\|^2_2&\leq&\ds\sum\limits_{i\in\mathbb{Z}}
\ds\sum\limits_{l=0}^{k-1}|F_{g^{(l)}}f(x_{i,j},y_j)|^2\leq B_{k,\nu}\|M_{-y_j}f*\widetilde{g}\|^2_2.\nonumber\\
\end{eqnarray}
Using Plancherel's identity, we get $\|M_{-y_j}f*\widetilde{g}\|^2_2=\ds\int\limits_{\mathbb{R}}|\widehat{f}(\xi)|^2|\widehat{g}(\xi-y_j)|^2\mathrm{d}\xi$.
Summing over all $j\in\mathbb{Z}$, we get
\begin{eqnarray}
\sum\limits_{j\in\mathbb{Z}}\|M_{-y_j}f*\widetilde{g}\|^2_2=\int\limits_{\mathbb{R}}|\widehat{f}(\xi)|^2
\sum\limits_{j\in\mathbb{Z}}|\widehat{g}(\xi-y_j)|^2\mathrm{d}\xi.
\end{eqnarray}
Hence, it follows from  \eqref{pap4eqn2.3} and \eqref{pap4eqn2.4} that
\begin{eqnarray*}
aA_{k,\nu}\|f\|^2_2&\leq&\ds\sum\limits_{i,j\in\mathbb{Z}}
\ds\sum\limits_{l=0}^{k-1}|F_{g^{(l)}}f(x_{i,j},y_j)|^2\leq bB_{k,\nu}\|f\|^2_2.
\end{eqnarray*}
Since $F_gf(x,y)=\langle f, M_yT_xg \rangle$, we get the required assertion.
\end{pf}

\subsection{Sampling Density in the Shift-invariant Spline Spaces}
The $B$-splines $Q_m$ are defined inductively as follows:
\begin{align*}
\hspace*{-1cm}Q_1(x):=\chi_{[0,1)}(x),
\end{align*}
\begin{align*}
Q_{m+1}(x)=
(Q_m*Q_1)(x):=\displaystyle{\int\limits_{-\infty}^{\infty}Q_m(x-t)Q_1(t)\mathrm{d}t},
\hspace*{0.5cm}m\geq1.
\end{align*}
It is well known that $f\in V(Q_m)$ if and only if $f\in C^{m-2}(\mathbb{R})\cap L^2(\mathbb{R})$ and the restriction of $f$ to each interval $\left[n,n+1\right),n\in\mathbb{Z}$ is a polynomial of degree $m-1$.
The following result is due to Babenko and Zontov in \cite{BaZo}.
\begin{thm}[Bernstein's inequality]\label{pap2thmBernineq}
For any $f\in V(Q_m)$,  we have
\begin{eqnarray}\label{pap7eqn3.24}
\|f^{(k)}\|_{L^2(\mathbb{R})}\leq \pi^k \sqrt{\dfrac{K_{2(m-k)-1}}{K_{2m-1}}}~\|f\|_{L^2(\mathbb{R})},~~~ k=1,2,3,\dots,m-1,
\end{eqnarray}
where $K_m$'s are the Krein-Favard constants:
\begin{eqnarray*}
K_m=\dfrac{4}{\pi}\sum\limits_{\nu=0}^{\infty}\frac{(-1)^{\nu(m+1)}}{(2\nu+1)^{m+1}},~~~m=0,1,2,3,\dots
\end{eqnarray*}
Moreover, the inequality \eqref{pap7eqn3.24} is sharp.
\end{thm}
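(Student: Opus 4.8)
The plan is to obtain Theorem~\ref{pap2thmBernineq} as the explicit specialisation of Theorem~\ref{pap4thm2.3} to the $B$-spline generator $\phi=Q_m$, for which all the quantities involved can be written in closed form. First I would use $\widehat{Q_m}(w)=\bigl(\tfrac{1-e^{-2\pi iw}}{2\pi iw}\bigr)^m$, so that $|\widehat{Q_m}(w)|^2=\bigl(\tfrac{\sin\pi w}{\pi w}\bigr)^{2m}$, together with the well-known fact that $\{T_nQ_m:n\in\mathbb{Z}\}$ is a Riesz basis for $V(Q_m)$. The single hypothesis of Theorem~\ref{pap4thm2.3} not literally available is $Q_m\in\mathcal{A}^{2k}$ (the spline is only $C^{m-2}$), but the Plancherel chain used there needs nothing more than $|\widehat{Q_m}(w)|^2=\mathcal{O}(|w|^{-2m})$, which makes every periodised series below converge uniformly precisely when $1\le k\le m-1$. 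So for $f=\ds\sum_jc_jQ_m(\cdot-j)\in V(Q_m)$ with $m_c(w)=\ds\sum_jc_je^{-2\pi ijw}$ one gets, exactly as in that proof,
\begin{equation*}
\|f^{(k)}\|_2^2=(2\pi)^{2k}\!\int_0^1\!|m_c(w)|^2\!\sum_{l\in\mathbb{Z}}(w+l)^{2k}|\widehat{Q_m}(w+l)|^2\,\mathrm{d}w,\qquad\|f\|_2^2=\int_0^1\!|m_c(w)|^2\!\sum_{l\in\mathbb{Z}}|\widehat{Q_m}(w+l)|^2\,\mathrm{d}w.
\end{equation*}
With $\psi_n(w):=\ds\sum_{l\in\mathbb{Z}}(w+l)^{-2n}$ and $\sin^{2m}\pi(w+l)=\sin^{2m}\pi w$, both periodised series collapse to $\pi^{-2m}\sin^{2m}(\pi w)\,\psi_{m-k}(w)$ and $\pi^{-2m}\sin^{2m}(\pi w)\,\psi_m(w)$; hence the best Bernstein constant on $V(Q_m)$ is $(2\pi)^k\bigl(\mathop{\mathrm{esssup}}_{w\in(0,1)}R_k(w)\bigr)^{1/2}$, where $R_k(w):=\psi_{m-k}(w)/\psi_m(w)$, and the proof is reduced to a one-variable extremal problem.

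The second step is to evaluate $R_k$ at $w=\tfrac12$. Pairing the indices $l$ and $-1-l$ gives $\psi_n(\tfrac12)=\ds\sum_{l\in\mathbb{Z}}(l+\tfrac12)^{-2n}=2^{2n+1}\ds\sum_{l\ge0}(2l+1)^{-2n}$, and since $2n$ is even the definition of the Krein--Favard constants yields $\ds\sum_{l\ge0}(2l+1)^{-2n}=\tfrac{\pi}{4}K_{2n-1}$, so $\psi_n(\tfrac12)=2^{2n-1}\pi K_{2n-1}$. Therefore
\begin{equation*}
R_k(\tfrac12)=\frac{2^{2(m-k)-1}\pi K_{2(m-k)-1}}{2^{2m-1}\pi K_{2m-1}}=\frac{1}{4^k}\cdot\frac{K_{2(m-k)-1}}{K_{2m-1}},
\end{equation*}
and $(2\pi)^k R_k(\tfrac12)^{1/2}=\pi^k\sqrt{K_{2(m-k)-1}/K_{2m-1}}$, which is exactly the constant in the statement. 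Everything therefore comes down to proving $R_k(w)\le R_k(\tfrac12)$ for all $w\in(0,1)$, i.e.\ that $w=\tfrac12$ realises the supremum; this is the main obstacle.

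To handle it I would first note that $R_k$ is $1$-periodic, symmetric about $\tfrac12$ (replace $l$ by $-1-l$), smooth and positive on $(0,1)$, and $R_k(w)\to0$ as $w$ approaches an integer (since $\psi_m$ blows up faster than $\psi_{m-k}$), so it suffices to show $R_k$ is nondecreasing on $(0,\tfrac12]$. Factor $R_k(w)=\ds\prod_{n=m-k}^{m-1}\psi_n(w)/\psi_{n+1}(w)$ and treat one factor at a time. Because $\log|w+l|$ is concave on $(0,1)$, each term $(w+l)^{-2n}$ is log-convex there, hence $\psi_n$ is log-convex and $(\log\psi_n)'$ is increasing and, being odd about $\tfrac12$, negative on $(0,\tfrac12)$. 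The key monotonicity is then that $(\log\psi_n)'(w)$ is \emph{decreasing} in $n$ on $(0,\tfrac12)$, equivalently $\bigl(\log(\psi_n/\psi_{n+1})\bigr)'\ge0$ there: writing $u=w-\tfrac12\in(-\tfrac12,0)$, $a_l=l+\tfrac12$ and grouping $\psi_n(w)=\ds\sum_{l\ge0}\bigl[(a_l+u)^{-2n}+(a_l-u)^{-2n}\bigr]$, one has
\begin{equation*}
-\frac{(\log\psi_n)'(w)}{2n}=\frac{\ds\sum_{l\ge0}\bigl[(a_l+u)^{-2n-1}-(a_l-u)^{-2n-1}\bigr]}{\ds\sum_{l\ge0}\bigl[(a_l+u)^{-2n}+(a_l-u)^{-2n}\bigr]}=\sum_{l\ge0}\lambda_l(n)\,r_l(n),
\end{equation*}
a weighted average of the pair ratios $r_l(n)=(a_l+u)^{-1}\dfrac{1-s_l^{-(2n+1)}}{1+s_l^{-2n}}$ (with $s_l=(a_l-u)/(a_l+u)>1$) against weights $\lambda_l(n)\propto(a_l+u)^{-2n}+(a_l-u)^{-2n}$. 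As $n$ grows, each $r_l(n)$ increases toward $(a_l+u)^{-1}$ while the weights concentrate on the dominant pair $l=0$, which carries the largest $r_l$; since both $l\mapsto r_l(n)$ and $l\mapsto\lambda_l(n+1)/\lambda_l(n)$ are decreasing, a monotone-likelihood-ratio computation shows the whole average — and hence $-(\log\psi_n)'(w)$, which picks up the extra factor $2n$ — is increasing in $n$. Granting this, $(\log R_k)'=\ds\sum_{n=m-k}^{m-1}\bigl(\log(\psi_n/\psi_{n+1})\bigr)'\ge0$ on $(0,\tfrac12)$, so $R_k$ is nondecreasing there and $\mathop{\mathrm{esssup}}R_k=R_k(\tfrac12)$. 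Finally, sharpness is inherited from Theorem~\ref{pap4thm2.3} (cf.\ \cite{BaZo}): choosing $(c_j)$ so that $|m_c|^2$ concentrates near $w=\tfrac12$ forces $\|f^{(k)}\|_2^2/\|f\|_2^2\to(2\pi)^{2k}R_k(\tfrac12)$, so the constant cannot be decreased. The delicate point of the whole argument is this $n$-monotonicity of $(\log\psi_n)'$ on $(0,\tfrac12)$; a fully detailed treatment is the one carried out in \cite{BaZo}.
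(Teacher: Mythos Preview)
The paper does not give its own proof of this theorem: it is stated as a result of Babenko and Zontov and simply cited from \cite{BaZo}. So there is nothing in the paper to compare your argument against line by line.

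That said, your proposal is a faithful reconstruction of how such a result is obtained, and it is consistent with the general machinery the paper does develop. You correctly specialise the Plancherel identity underlying Theorem~\ref{pap4thm2.3} to $\phi=Q_m$, correctly observe that the decay $|\widehat{Q_m}(w)|^2=\mathcal{O}(|w|^{-2m})$ is all that is needed for the periodised series to converge (so the failure of $Q_m\in\mathcal{A}^{2k}$ in the pointwise-differentiability sense is irrelevant for $1\le k\le m-1$), and correctly reduce the optimal constant to $\sup_{w\in(0,1)}\psi_{m-k}(w)/\psi_m(w)$. The evaluation at $w=\tfrac12$ and the identification with the Krein--Favard constants are right, yielding exactly $\pi^k\sqrt{K_{2(m-k)-1}/K_{2m-1}}$.

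The only soft spot is exactly where you flag it: the $n$-monotonicity of $(\log\psi_n)'$ on $(0,\tfrac12)$. Your MLR heuristic (the pair ratios $r_l(n)$ increase in $n$, the weights concentrate on $l=0$, and both sequences are monotone in $l$) is the right intuition, but as written it is a sketch rather than a proof; in particular one must check carefully that the extra factor $2n$ does not spoil the comparison between consecutive $n$, and that the ``concentration'' statement is made precise via a genuine likelihood-ratio inequality rather than an informal dominance claim. You are candid that the full argument is in \cite{BaZo}, which is also what the paper itself relies on. With that caveat, your write-up is correct and goes well beyond what the paper actually provides.
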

The first six values of $K_m$ are the following:
\begin{align*}
K_0=1,~K_1=\dfrac{\pi}{2},~ K_2=\dfrac{\pi^2}{8}, ~K_3=\dfrac{\pi^3}{24},~ K_4=\dfrac{5\pi^4}{384}, ~ K_5=\dfrac{\pi^5}{240},
\end{align*}
and $\lim\limits_{m\to\infty}K_m=\dfrac{4}{\pi}$. The constants $K_m$ also satisfy the following inequalities:
$$1=K_0<K_2<K_4<\cdots<\dfrac{4}{\pi}<\cdots<K_5<K_3<K_1=\dfrac{\pi}{2}.$$
We refer to \cite{GoFe} in this connection. Since the inequality \eqref{pap7eqn3.24} is sharp,  $M_{2k}=\dfrac{K_{2(m-2k)-1}}{2^{4k}K_{2m-1}}$. Hence, we obtain the following result from Corollary \ref{pap7cor3.1}.
\begin{thm}
Let $\{x_i:{i\in\mathbb{Z}}\}$ be separated by a constant $\gamma>0$.
If  
$$\sup\limits_{i\in\mathbb{Z}}(x_{i+\nu}-x_i)=\delta_\nu<\dfrac{\nu}{\pi}\left(\dfrac{c_{k}^2K_{2m-1}}{K_{2(m-2k)-1}}\right)^{\frac{1}{4k}},~\text{for some}~\nu\in\mathbb{N},$$
then  there exist positive constants 
$A_{\nu,k}$  and $B_{\nu,k}$ such that
\begin{eqnarray}\label{pap3eqn2.7}
A_{\nu,k}\| f\|_2^2&\leq&\ds\sum\limits_{i\in\mathbb{Z}}
\ds\sum\limits_{l=0}^{k-1}|f^{(l)}(x_i)|^2\leq B_{\nu,k}\| f\|_2^2,
\end{eqnarray}
every $f\in V(Q_m)$, where $m\geq 2k+1$.
\end{thm}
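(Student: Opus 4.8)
The plan is to obtain this theorem as an immediate specialization of Corollary~\ref{pap7cor3.1} to the $B$-spline generator $\phi=Q_m$; the only things to supply are a verification that $Q_m$ fits the framework of that corollary and an explicit value for the Bernstein constant $M_{2k}$ of $V(Q_m)$, after which the displayed density threshold is a one-line algebraic rewriting of the one in Corollary~\ref{pap7cor3.1}.

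First I would check the hypotheses of Corollary~\ref{pap7cor3.1} for $\phi=Q_m$ under the assumption $m\ge 2k+1$. Recall that $Q_m$ is supported on $[0,m]$, lies in $C^{m-2}(\mathbb{R})$, and $\widehat{Q_m}(\xi)=\bigl(\tfrac{1-e^{-2\pi i\xi}}{2\pi i\xi}\bigr)^m$, so that for $w\notin\mathbb{Z}$ and large $|l|$ one has $(w+l)^{2s}|\widehat{Q_m}(w+l)|^2=\mathcal{O}(|l|^{2s-2m})$; hence condition $(ii)$ in the definition of $\mathcal{A}^{2k}$ holds precisely when $2(2k)\le 2m-1$, i.e.\ when $m\ge 2k+1$, and condition $(i)$ is automatic since $Q_m$ is compactly supported with its first $m-1\ge 2k$ derivatives in $L^\infty$. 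Moreover $Q_m\in W(C,\ell^1)$ because it is continuous with compact support, and it is classical that $0<c\le G_{Q_m}(w)\le C<\infty$, so $\{T_nQ_m:n\in\mathbb{Z}\}$ is a Riesz basis for $V(Q_m)$. Thus $Q_m$ falls under the scope of Corollary~\ref{pap7cor3.1}.

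Next I would pin down $M_{2k}$. Since $2k\le m-1$, the sharp Bernstein inequality of Babenko--Zontov (Theorem~\ref{pap2thmBernineq}) gives $\|f^{(2k)}\|_2\le\pi^{2k}\sqrt{K_{2(m-2k)-1}/K_{2m-1}}\,\|f\|_2$ for every $f\in V(Q_m)$. Comparing this with the normalization $\|f^{(2k)}\|_2\le(2\pi)^{2k}\sqrt{M_{2k}}\,\|f\|_2$ of Theorem~\ref{pap4thm2.3} and using that both constants are best possible forces $M_{2k}=\dfrac{K_{2(m-2k)-1}}{2^{4k}K_{2m-1}}$. Plugging this into the density condition of Corollary~\ref{pap7cor3.1},
\[
\frac{\nu}{2\pi}\Bigl(\frac{c_k^{2}}{M_{2k}}\Bigr)^{\frac{1}{4k}}
=\frac{\nu}{2\pi}\Bigl(2^{4k}\,\frac{c_k^{2}K_{2m-1}}{K_{2(m-2k)-1}}\Bigr)^{\frac{1}{4k}}
=\frac{\nu}{\pi}\Bigl(\frac{c_k^{2}K_{2m-1}}{K_{2(m-2k)-1}}\Bigr)^{\frac{1}{4k}},
\]
which is exactly the threshold in the statement, and the two-sided sampling inequality for $\{f^{(l)}(x_i):0\le l\le k-1,\ i\in\mathbb{Z}\}$ then follows verbatim from Corollary~\ref{pap7cor3.1}.

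The only subtlety I expect is the borderline case $m=2k+1$, where $Q_m$ and a generic $f\in V(Q_m)$ are only $C^{2k-1}$, so that the Wirtinger--Sobolev estimate~\eqref{pap7eqn2.6a} and Theorem~\ref{pap4thm2.3} cannot be quoted for $C^{2k}$ functions literally. This is harmless: every $f\in V(Q_m)$ is a genuine piecewise polynomial of degree $m-1$, so its $2k$-th weak derivative $f^{(2k)}$ is a piecewise polynomial of degree $m-1-2k\ge 0$ lying in $L^2$, and both \eqref{pap7eqn2.6a} (applied on each subinterval to $f-H_{2k-1}(x_i,x_{i+1},f;\cdot)$, whose endpoint data up to order $k-1$ vanish) and the Bernstein estimate of Theorem~\ref{pap4thm2.3} remain valid in this $W^{2k,2}$ setting by a routine density argument. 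With this understood, the entire chain of estimates underlying Corollary~\ref{pap7cor3.1} carries over, completing the proof.
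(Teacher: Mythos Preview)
Your proposal is correct and follows exactly the route the paper takes: compute the sharp Bernstein constant $M_{2k}=K_{2(m-2k)-1}/(2^{4k}K_{2m-1})$ from the Babenko--Zontov inequality and substitute it into the density threshold of Corollary~\ref{pap7cor3.1}. The paper states this in one line without checking that $Q_m\in\mathcal{A}^{2k}$ or discussing the borderline regularity at $m=2k+1$; your additional verification of these points and the weak-derivative remark are welcome details, but do not constitute a different argument.
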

The maximum gap condition is given in \cite{AntoRad1} for $\nu=1$. In Table 2, we provided maximum gap value for certain values of $\nu$ and $k$ with $m=8$.
In Table 3, we assume $m=10$.\\
\begin{center}
 \begin{tabular}{|l|l|l|l|l|l|l|}
  \hline
  \multicolumn{5}{|c|}{$\delta_\nu<\dfrac{\nu}{\pi}\left(\dfrac{c_{k}^2K_{15}}{K_{15-4k}}\right)^{\frac{1}{4k}}$}\\
  \hline
\backslashbox{$k$}{$\nu$}  &1&2&3&4\\
  \hline
  1 &0.9999&1.9998&2.9997&3.9996\\
   \hline
  2 &1.5055&3.011&4.5165&6.022\\
  \hline
  3 &1.9975&3.995&5.9925&7.99\\
  \hline
     \end{tabular}\\
\vspace*{0.3cm}
Table 2
\end{center}

\vspace*{0.5cm}
\begin{center}
 \begin{tabular}{|l|l|l|l|l|l|}
  \hline
  \multicolumn{5}{|c|}{$\delta_\nu<\dfrac{\nu}{\pi}\left(\dfrac{c_{k}^2K_{19}}{K_{19-4k}}\right)^{\frac{1}{4k}}$}\\
  \hline
 \backslashbox{$k$}{$\nu$} &1&2&3&4\\
  \hline
  1 &0.9999&1.9998&2.9997&3.9996\\
   \hline
  2 &1.5056&3.0112&4.5168&6.0224\\
  \hline
  3 &1.9999&3.9998&5.9997&7.996\\
  \hline
     \end{tabular}\\
\vspace*{0.3cm}
Table 3
\end{center}
\subsection{Sampling Density in Shift-invariant Space Generated by Meyer Scaling Function}
Consider a function $\vartheta(w)$ defined on the interval
$0\leq w \leq 1$ satisfying the following properties:
\begin{itemize}
\item[$(P_1)$] $0\leq\vartheta(w)\leq 1$,
\item[$(P_2)$] $\vartheta(w)+\vartheta(1-w)=1$,
\item[$(P_3)$] $\vartheta(w)$ is a monotonically decreasing function,
\item[$(P_4)$] $\vartheta(w)=1$, $0\leq w \leq \dfrac{1}{3}$,
\item[$(P_5)$] $\vartheta(w)\geq 2-3w$ in $[1/3,1/2]$ and $\vartheta(w)\leq 2-3w$ in $[1/2,2/3]$.
\end{itemize}
The function $\vartheta$ is extended to the real line by setting
$\vartheta(w)=\vartheta(-w)$ for $-1\leq w\leq 0$ and
$\vartheta(w)=0$ for $|w| > 1$. Then the Meyer scaling function
is defined as
\begin{eqnarray*}
\phi(x):=\displaystyle\int\limits_{-1}^{1}\sqrt{\vartheta(w)}e^{2\pi iw x}\mathrm{d}w.
\end{eqnarray*}
 Consider the function
\begin{equation*}
 \widehat{\phi}(w)= \left\{\begin{array}{cc}
1& \mbox{ $|w|\leq \dfrac{1}{3}$,}\\
\cos\Big[\dfrac{\pi}{2}\nu(3|w|-1)\Big] &\mbox{ $\dfrac{1}{3}\leq|w|\leq \dfrac{2}{3}$,}\\
0 &\mbox{otherwise,}
\end{array} \right.
\end{equation*}
where $\nu$ is a  $C^\infty$ function satisfying
\begin{equation*}
 \nu(x)= \left\{\begin{array}{cc}
0& \mbox{~~if~ $x\leq0$,}\\
1 &\mbox{~~if~ $x\geq1$,}
\end{array} \right.
\end{equation*}
with the additional property
\begin{eqnarray*}
\nu(x)+\nu(1-x)=1.
\end{eqnarray*}
It is proved in \cite{AntoRad3} that $\phi$ satisfies the properties $(P_1)$ to $(P_5)$. 
We refer to \cite{AntoRad3} for further details.

Since the collection $\{T_n\phi:n\in\mathbb{Z}\}$ forms an
orthonormal basis for $V(\phi)$, we get
$\sum\limits_{l\in\mathbb{Z}}|\widehat{\phi}(w+l)|^2=1 ~~a.e.,~w\in\mathbb{R}$.
Therefore, 
\begin{equation*}
B_s(w)= \left\{\begin{array}{cc}
w^{2s}& \mbox{~~if $0\leq w\leq 1/3$,}\\
(w-1)^{2s}\vartheta(w-1)+ w^{2s}\vartheta(w)&\mbox{~~if $1/3\leq w\leq 2/3$,}\\
(w-1)^{2s}&\mbox{~~if $2/3\leq w\leq 1$.}\\
\end{array} \right.
\end{equation*}
Using property $P_2$, we get
\begin{equation*}
B_s(w)= \left\{\begin{array}{cc}
w^{2s}& \mbox{~~if $0\leq w\leq 1/3$,}\\
(w-1)^{2s}+[w^{2s}-(w-1)^{2s}]\vartheta(w)&\mbox{~~if $1/3\leq w\leq 2/3$,}\\
(w-1)^{2s}&\mbox{~~if $2/3\leq w\leq 1$.}\\
\end{array} \right.
\end{equation*}
Since  $1/2\leq\vartheta(w)\leq1$ and $w^{2r}-(w-1)^{2r}\leq 0$ in the interval $[1/3,1/2]$, and $0\leq\vartheta(w)\leq1/2$ and $w^{2s}-(w-1)^{2s}\leq0$ in the interval $[1/2,2/3]$, we get
\begin{eqnarray*}
B_s(w)&\leq&(w-1)^{2s}+[w^{2s}-(w-1)^{2s}]\Big(\dfrac{1}{2}\Big)\\
&=&\dfrac{w^{2s}+(w-1)^{2s}}{2}\leq\dfrac{(1/3)^{2s}+(2/3)^{2s}}{2}.
\end{eqnarray*}
in $[1/3,2/3]$. In this case, $M_s\leq\dfrac{(1/3)^{2s}+(2/3)^{2s}}{2}$.
It is proved in \cite{AntoRad3} that $M_1=\dfrac{1}{4}$.
We now compute the value of $M_2$.
Using property $P_2$, we get
\begin{eqnarray*}
B_2(w)&=&(w-1)^{4}[1-\vartheta(w)]+ w^{4}\vartheta(w)\\
&=&w^4-4w^3+6w^2-4w+1+[4w^3-6w^2+4w-1]\vartheta(w),
\end{eqnarray*}
in the interval $[1/3, 2/3]$. Consider the function
$$h(w)=4w^3-6w^2+4w-1, ~w\in[1/3,2/3].$$
Then  $h(1/2)=0$ and $h^\prime(w)=12w^2-12w+4=12(w^2-w)+4$.
Since the function $w^2-w$ attains its minimum value at $w=1/2$, we have
$h^\prime(w)=12w^2-12w+4=12(w^2-w)+4\geq 1$. Therefore, $h(w)$ is an increasing function in the interval $[1/3, 2/3]$, which implies
that $h(w)$ is negative in $[1/3,1/2]$ and positive in $[1/2,2/3]$.
Now using property $P_5$, we get
\begin{eqnarray*}
B_2(w)&=&w^4-4w^3+6w^2-4w+1+[4w^3-6w^2+4w-1]\vartheta(w)\\
&\leq&w^4-4w^3+6w^2-4w+1+(4w^3-6w^2+4w-1)(2-3w)\\
&=&-11w^4+22w^3-18w^2+7w-1.
\end{eqnarray*}
Since the function
$g(w)=-11w^4+22w^3-18w^2+7w-1$ attains its maximum at $w=1/2$, we have $B_2(w)\leq \dfrac{1}{16}$ in $[1/3,2/3]$.
Therefore, $M_2=\dfrac{1}{16}$. Hence, it follows from Corollary \ref{pap7cor3.1} that if there exists a positive integer $\nu$ such that 
$$\sup\limits_i(x_{i+\nu}-x_i)=\delta_\nu<\nu,$$
then the set $X=\{x_i:i\in\mathbb{Z}\}$ is a stable set of sampling for $V(\phi)$.
Further, arguing as in \cite{AntoRad3}, we can prove that the maximum gap $\delta_\nu<\nu$ is sharp. 
Thus we have established  Theorem \ref{pap7thm1.1} for a shift-invariant space generated by a Meyer scaling function, which we formulate here as follows:
\begin{thm}\label{pap7thm3.7}
Let $\vartheta$  be a $C^\infty$ function satisfying the properties $(P_1)$ to $(P_5)$ and
\begin{eqnarray*}
\phi(x):=\displaystyle\int\limits_{-1}^{1}\sqrt{\vartheta(w)}e^{2\pi iw x}\mathrm{d}w
\end{eqnarray*}
be the Meyer scaling function. Assume that $\{x_i:{i\in\mathbb{Z}}\}$ is separated by a constant $\gamma>0$.
If there exists a positive integer $\nu$ with
$$\sup\limits_i(x_{i+\nu}-x_i)=\delta_\nu<\nu,$$
then there exist positive constants $A_\nu$ and $B_\nu$ such that
\begin{eqnarray}\label{pap3eqn2.7}
A_\nu\| f\|_2^2&\leq&\ds\sum\limits_{i\in\mathbb{Z}}|f(x_i)|^2\leq B_\nu\| f\|_2^2,
\end{eqnarray}
for every $f\in V(\phi)$.  Further, the maximum gap  $\delta_\nu<\nu$  is sharp. 
\end{thm}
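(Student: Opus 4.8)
The plan is to obtain the frame inequality as the case $k=1$ of Corollary \ref{pap7cor3.1} and to prove sharpness by exhibiting a separated set that realises $\delta_\nu=\nu$ yet is not a stable set of sampling. First I would check the hypotheses of Corollary \ref{pap7cor3.1} for $k=1$. Since $\widehat\phi=\sqrt{\vartheta}$ is bounded, of bounded variation, and compactly supported, the functions $\phi,\phi',\phi''$ satisfy $\phi^{(s)}(x)=O(|x|^{-1})$ as $x\to\pm\infty$; hence $\phi\in\mathcal{A}^{2}$ (condition $(i)$ holds with $\epsilon=\tfrac14$, and condition $(ii)$ holds because $\widehat\phi$ is bounded with compact support). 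Moreover, by $(P_2)$ and the even extension of $\vartheta$,
\[
G_\phi(w)=\sum_{l\in\mathbb{Z}}|\widehat\phi(w+l)|^{2}=\vartheta(w)+\vartheta(1-w)=1\qquad\text{a.e. }w\in\mathbb{R},
\]
so $\{T_n\phi:n\in\mathbb{Z}\}$ is an orthonormal, hence Riesz, basis for $V(\phi)$, and Corollary \ref{pap7cor3.1} applies.

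The decisive point is that for this generator the density threshold appearing in Corollary \ref{pap7cor3.1} is exactly $\nu$ when $k=1$. We have $c_1=\pi^{2}$, and the computation of $B_2$ on $[1/3,2/3]$ carried out above — rewriting $B_2(w)=(w-1)^{4}+\bigl[w^{4}-(w-1)^{4}\bigr]\vartheta(w)$ via $(P_2)$, using $(P_5)$ together with the monotonicity of $h(w)=4w^{3}-6w^{2}+4w-1$ and its sign change at $w=\tfrac12$, and maximising $g(w)=-11w^{4}+22w^{3}-18w^{2}+7w-1$ at $w=\tfrac12$ — gives $M_2=\tfrac1{16}$. Therefore
\[
\frac{\nu}{2\pi}\Bigl(\frac{c_1^{2}}{M_2}\Bigr)^{1/4}=\frac{\nu}{2\pi}\bigl(16\pi^{4}\bigr)^{1/4}=\nu,
\]
so the hypothesis $\delta_\nu=\sup_i(x_{i+\nu}-x_i)<\nu$ is precisely the hypothesis of Corollary \ref{pap7cor3.1} with $k=1$. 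Since for $k=1$ the sum $\sum_{l=0}^{k-1}$ reduces to the single term $l=0$, that corollary produces positive constants $A_\nu:=A_{\nu,1}$ and $B_\nu:=B_{\nu,1}$ with $A_\nu\|f\|_2^{2}\le\sum_{i\in\mathbb{Z}}|f(x_i)|^{2}\le B_\nu\|f\|_2^{2}$ for all $f\in V(\phi)$, which is the asserted inequality.

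For the sharpness claim I would consider the separated set
\[
x_i=\begin{cases} i-\tfrac14,& i>0,\\ 0,& i=0,\\ i+\tfrac14,& i<0,\end{cases}\qquad \inf_{i\neq j}|x_i-x_j|=\tfrac34>0,
\]
which satisfies $\sup_i(x_{i+\nu}-x_i)=\nu$ for every $\nu\in\mathbb{N}$, and show, following \cite{AntoRad3}, that it is not a stable set of sampling for $V(\phi)$: one constructs $f_n\in V(\phi)$ with $\|f_n\|_2=1$ and $\sum_i|f_n(x_i)|^{2}\to0$, which rules out any lower frame bound. Producing these $f_n$ is the step I expect to be the main obstacle, and it genuinely requires the non-bandlimited structure of $V(\phi)$: periodicity of $m_f$ forces every $f\in V(\phi)$ with $\operatorname{supp}\widehat f\subseteq[-\tfrac12,\tfrac12]$ to vanish in frequency on $(\tfrac13,\tfrac12]\cup[-\tfrac12,-\tfrac13)$, so $V(\phi)\cap\mathcal{B}_\pi=\mathcal{B}_{2\pi/3}$, and the density-one set $X$ already oversamples $\mathcal{B}_{2\pi/3}$; hence the $f_n$ must be assembled directly from $\phi$ with $\ell^{2}$ coefficient sequences tuned to the $\tfrac14$-shifts defining $X$, exactly as in \cite{AntoRad3}. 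Once such $f_n$ are available, $X$ realises $\delta_\nu=\nu$ and fails to be a stable set of sampling for $V(\phi)$; since $\delta_\nu<\nu$ suffices by the first part, the maximum-gap condition $\delta_\nu<\nu$ is sharp, completing the proof.
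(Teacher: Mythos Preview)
Your proposal is correct and follows essentially the same route as the paper: both obtain the frame inequality by specialising Corollary~\ref{pap7cor3.1} to $k=1$ after verifying $M_2=\tfrac{1}{16}$ (so that $\tfrac{\nu}{2\pi}(c_1^{2}/M_2)^{1/4}=\nu$), and both establish sharpness by invoking the argument of \cite{AntoRad3}. Your additional explicit checks that $\phi\in\mathcal{A}^{2}$ and $G_\phi\equiv 1$, and your remark on why the bandlimited counterexample cannot be transplanted verbatim, are welcome elaborations but do not change the strategy.
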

 
\section{Open problems}
In this section, we discuss some open problems related to the maximum gap condition for shift-invariant spaces.
\begin{itemize}
 \item [(1)] We first observed that the proof of Theorem \ref{pap7thm1.1} in section 3 is based on one of the constants $M_{s}$,
\begin{equation}
M_{s}=\mathop{\rm{esssup}}\limits_{w\in[0,1]}\dfrac{\sum\limits_{l\in\mathbb{Z}}(w+l)^{2s}|\widehat{\phi}(w+l)|^2}{\sum\limits_{l\in\mathbb{Z}}|\widehat{\phi}(w+l)|^2},~w\in\mathbb{R},~s=1,2,\dots,r.
\end{equation}
which appears in Bernstein inequality on a shift-invariant space $V(\phi)$.
The value of the constant $M_s$ is known only for the function $\tfrac{\sin \pi x}{\pi x}$ and  the $B$-splines.
The constant $M_1$ is known for the Meyer scaling function and Littlewood-Paley wavelet (see \cite{AntoRad3}). 
Arguing as in \cite{AntoRad3}, we  can easily obtain other values of $M_s$ for Littlewood-Paley wavelet. In this paper, we found the value of $M_2$ for Meyer scaling function. 
We do not know the value of $M_s,~s>2$ for Meyer scaling function and this problem is still open. In general,  the problem of finding the value of $M_s$ for some reasonable function $\phi$ will be more interesting.
\end{itemize}
\begin{itemize}
\item [(2)]  We have proved the sharp maximum gap condition $\sup_{i}(x_{i+\nu}-x_i)$ for the space of entire function of exponential type $\pi$ and  shift-invariant space generated by Meyer scaling function.
We observed from the Table 2 and 3 that the maximum gap $\sup_{i}(x_{i+\nu}-x_i)$ is very close to $\nu$ for shift-invariant spline spaces. 
Aldroubi and Gr\"{o}chenig in \cite{AlGr1} proved that if $\{x_{i}\}$ is  a separated set with $\sup_{i}(x_{i+1}-x_i)<1$, 
then  $\{x_i:i\in\mathbb{Z}\}$ is a stable set of sampling for the shift-invariant spline space $V(Q_m)$. For other cases $\nu\geq1$, it seems natural to state the following 
\begin{conj}
If $\{x_i:i\in\mathbb{Z}\}$ is  a separated 
set with $$\sup\limits_{i}(x_{i+\nu}-x_i)<\nu,$$ 
then  $\{x_i:i\in\mathbb{Z}\}$ is a stable set of sampling for $V(Q_m)$.
\end{conj}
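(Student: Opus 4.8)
The plan is to deduce Theorem \ref{pap7thm3.7} directly from Corollary \ref{pap7cor3.1} applied with $k=1$, after pinning down the value of the Bernstein constant $M_2$ for the Meyer scaling function. Recall that for $k=1$ the hypothesis of Corollary \ref{pap7cor3.1} reads $\delta_\nu<\tfrac{\nu}{2\pi}(c_1^2/M_2)^{1/4}$, and the lower/upper sampling estimates it provides involve only $f$ itself (the sum over $l$ collapses to $l=0$). Since $c_1=\pi^2$, the condition becomes $\delta_\nu<\tfrac{\nu}{2\pi}\,\pi\,(4M_2)^{-1/4}=\tfrac{\nu}{2}(4M_2)^{-1/4}$; so it suffices to show $M_2=\tfrac{1}{16}$, which makes the right-hand side exactly $\nu$. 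Thus the heart of the argument is the computation of $M_2$, and everything else is bookkeeping: first I would verify that the Meyer scaling function $\phi$ lies in $\mathcal{A}^{2}$ (indeed $\widehat{\phi}$ is $C^\infty$ with compact support, so $\phi$ and all its derivatives decay faster than any polynomial, giving condition $(i)$, and condition $(ii)$ is immediate since the sum $\sum_l (w+l)^{2s}|\widehat\phi(w+l)|^2$ is a finite sum of bounded terms) and that $\{T_n\phi\}$ is an orthonormal, hence Riesz, basis for $V(\phi)$.

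\textbf{Computation of $M_2$.} Using $\sum_{l}|\widehat\phi(w+l)|^2=1$ and the support of $\widehat\phi$, only two translates can contribute on $[0,1]$, so $B_2$ reduces to the three-case formula displayed in the excerpt. On $[0,1/3]$ we have $B_2(w)=w^{2}\cdot 2=$ — no: more carefully, $B_2(w)=w^{4}$ there (only $l=0$ contributes and $|\widehat\phi(w)|=1$), which is bounded by $(1/3)^4<1/16$; by symmetry the same bound holds on $[2/3,1]$. The delicate range is $[1/3,2/3]$, where $B_2(w)=(w-1)^4[1-\vartheta(w)]+w^4\vartheta(w)$. Writing $h(w)=4w^3-6w^2+4w-1$ one has $B_2(w)=(w-1)^4+[w^4-(w-1)^4]\vartheta(w)=\,$(polynomial)$\,+h(w)\vartheta(w)$ after expanding; I would check $h(1/2)=0$ and $h'(w)=12(w^2-w)+4\geq 1>0$, so $h$ is increasing, negative on $[1/3,1/2)$ and positive on $(1/2,2/3]$. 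Then property $(P_5)$ ($\vartheta(w)\ge 2-3w$ on $[1/3,1/2]$, $\vartheta(w)\le 2-3w$ on $[1/2,2/3]$) gives in both subintervals $h(w)\vartheta(w)\le h(w)(2-3w)$, so $B_2(w)\le$ (polynomial)$+h(w)(2-3w)=g(w):=-11w^4+22w^3-18w^2+7w-1$. A single-variable maximization of $g$ on $[1/3,2/3]$ (check $g'(1/2)=0$ and that $1/2$ is the maximum) yields $g(1/2)=-11/16+22/8-18/4+7/2-1=1/16$. Hence $M_2\le 1/16$, and equality follows by testing $w=1/2$ with $\vartheta(1/2)=1/2$ (forced by $(P_2)$), which makes $B_2(1/2)=(1/2)^4=1/16$. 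Therefore $M_2=1/16$ and the gap condition $\delta_\nu<\nu$ coincides exactly with the hypothesis of Corollary \ref{pap7cor3.1} for $k=1$; the two-sided sampling inequality with constants $A_\nu,B_\nu>0$ is then precisely the conclusion of that corollary.

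\textbf{Sharpness.} For the final claim I would argue as in \cite{AntoRad3}: one exhibits a separated set $X$ with $\sup_i(x_{i+\nu}-x_i)=\nu$ (attained, not strict) which fails to be a stable set of sampling for $V(\phi)$. The natural candidate is a perturbation of the integers by jumps of size $1/4$, analogous to the Kadec-type example used for $\mathcal{B}_\pi$ earlier in the paper — one transports the known instability for the Meyer space from \cite{AntoRad3}, using that $V(\phi)$ contains, up to the unimodular factor $\widehat\phi$, enough room near the critical frequency $w=1/2$ (where $|\widehat\phi|$ is bounded below on a neighborhood) to reproduce the complex-exponential obstruction. Concretely one checks that the frame operator associated with $X$ has no positive lower bound by producing a normalized sequence $f_n\in V(\phi)$ with $\sum_i|f_n(x_i)|^2\to 0$.

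\textbf{Main obstacle.} The only genuinely nontrivial step is the evaluation of $M_2$: getting the right reduction to the single polynomial $g$ hinges on using property $(P_5)$ with the correct sign of $h$ on each half of $[1/3,1/2]\cup[1/2,2/3]$, and then the maximization must be done with care to confirm that the interior critical point $w=1/2$ is the global max on $[1/3,2/3]$ (rather than an endpoint). Once $M_2=1/16$ is in hand, the theorem is a direct specialization of Corollary \ref{pap7cor3.1}, and the sharpness is a transfer of the construction already cited from \cite{AntoRad3}.
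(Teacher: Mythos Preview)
Your proposal addresses the wrong statement. The statement you were asked to prove is the paper's \emph{Conjecture 1}, which concerns the shift-invariant spline spaces $V(Q_m)$; it is listed explicitly in Section~4 as an \emph{open problem}, and the paper offers no proof of it. What you have written is instead a proof of Theorem~\ref{pap7thm3.7}, the Meyer scaling function result. Everything you say about computing $M_2=\tfrac{1}{16}$ via properties $(P_1)$--$(P_5)$ of $\vartheta$, and then invoking Corollary~\ref{pap7cor3.1} with $k=1$, is correct for the Meyer generator and indeed matches the paper's own argument for that theorem---but it has nothing to do with $V(Q_m)$.

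Moreover, your method cannot be salvaged for the spline case. For $\phi=Q_m$ the sharp Bernstein constant from Theorem~\ref{pap2thmBernineq} gives $M_2=\dfrac{K_{2m-5}}{16\,K_{2m-1}}$, and since the odd-indexed Krein--Favard constants are strictly decreasing one has $K_{2m-5}>K_{2m-1}$, hence $M_2>\tfrac{1}{16}$. Plugging into the $k=1$ threshold of Corollary~\ref{pap7cor3.1} yields
\[
\delta_\nu<\nu\left(\frac{K_{2m-1}}{K_{2m-5}}\right)^{1/4}<\nu,
\]
which is precisely the $0.9999\ldots$ entry in Tables~2 and~3---close to $\nu$ but strictly smaller. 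So the machinery of Corollary~\ref{pap7cor3.1} with $k=1$ falls just short of the conjectured bound, and the paper says exactly this: the observation that the spline bounds are ``very close to $\nu$'' is what motivates stating the conjecture in the first place. A genuine proof would require a new idea that closes this gap, and neither you nor the paper supplies one.
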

We believe that the above conjecture also holds for a much larger class of shift-invariant spaces.
If $f$ and its first $k-1$ derivatives $f', \dots,f^{(k-1)}$ are sampled at a sequence $\{x_i:i\in\mathbb{Z}\}$, then we can generalize Conjecture 1 as follows.
\begin{conj}
If $\{x_{i}:i\in\mathbb{Z}\}$ is  a separated set with $$\sup\limits_{i}(x_{i+\nu}-x_i)<k\nu,$$
then every function $f\in V(\phi)$ can be reconstructed stably from its nonuniform sample values  $\{f^{(j)}(x_i):j=0,\dots, k-1, i\in\mathbb{Z}\}$.
\end{conj}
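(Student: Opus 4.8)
The plan is to retain the perturbation/iteration architecture of Theorem \ref{pap3thm2.2} but to replace its lossy Wirtinger--Sobolev plus Bernstein chain by a single sharp extremal estimate. As in that proof, it suffices to produce a bounded operator $T$ on $V(\phi)$ that is computable from $\{f^{(j)}(x_i):j=0,\dots,k-1\}$ and satisfies $\|I-T\|<1$; Proposition \ref{pap2prop2.1} then delivers stable reconstruction, and the two-sided frame inequality is extracted exactly as in Corollary \ref{pap7cor3.1}. Taking $T=P\big(\sum_{i}H_{2k-1}(x_i,x_{i+1},f;\cdot)\chi_{[x_i,x_{i+1})}\big)$, the whole question reduces to the single inequality
\begin{equation*}
\sum_{i\in\mathbb{Z}}\int_{x_i}^{x_{i+1}}\big|f(x)-H_{2k-1}(x_i,x_{i+1},f;x)\big|^2\,\mathrm{d}x<\|f\|_2^2,\qquad f\in V(\phi),
\end{equation*}
and the aim is to prove that this holds precisely when $\delta_\nu<k\nu$.

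The decisive step is to identify the sharp constant. Inequality \eqref{pap7eqn2.6a} bounds the local error by $c_k^{-2}(b-a)^{4k}\int|f^{(2k)}|^2$, and Bernstein's inequality \eqref{pap7eqn3.2} then replaces $\|f^{(2k)}\|_2$ by $(2\pi)^{2k}\sqrt{M_{2k}}\,\|f\|_2$; the loss arises because the Wirtinger extremizer (an eigenfunction of the two-point boundary value problem) and the Bernstein extremizer (a function concentrated at the band edge) are different, so the two inequalities are never saturated together. I would therefore fold both into one variational problem: on $V(\phi)$ pass to the fibre description $f\leftrightarrow m_f\,\widehat\phi$, write the left-hand side above as a quadratic form with an explicit $\delta_\nu$-dependent multiplier, and show that the top of the spectrum of this form crosses the value $1$ exactly at $\delta_\nu=k\nu$. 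For $\phi(x)=\sin\pi x/\pi x$, where $V(\phi)=\mathcal{B}_\pi$, this becomes a concrete extremal problem for entire functions of exponential type, and the critical spacing $k\nu$ is the Hermite--Birkhoff analogue of the Nyquist rate: each node carries $k$ data, so the natural density is $1/(k\nu)$. Here I expect the cleanest argument to bypass $T$ altogether and invoke Beurling--Landau density theory for derivative sampling: a $\gamma$-separated set with $\delta_\nu<k\nu$ has lower Beurling density at least $\nu/\delta_\nu>1/k$, whence $k$ times its density exceeds $1$ and it is a stable set of Hermite sampling for $\mathcal{B}_\pi$.

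The main obstacle is precisely this sharp constant, and the numbers already in the paper show how far the present method falls short. For $\mathcal{B}_\pi$ the threshold $\tfrac{\nu}{\sigma}c_k^{1/2k}$ of Theorem \ref{pap7thm3.3} gives only $\approx(4/\pi e)\,k\nu\approx0.468\,k\nu$ as $k\to\infty$, and Tables~2 and 3 exhibit the same sub-optimality for $V(Q_m)$ (for instance $1.5055$ against the conjectured $2$ when $k=2$). Reaching $k\nu$ demands an interpolation-error inequality tuned so that its extremizer coincides with the Bernstein extremizer, a genuinely nontrivial simultaneous-extremal problem; for an arbitrary generator it is further blocked by Open Problem~1, since the value of $M_s$ for $s>2$ is unknown. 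I therefore do not expect the iteration scheme alone to attain $k\nu$ for general $\phi$, and the realistic route is to settle the conjecture first in the two structured cases where a spectral or algebraic description is available.

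For the spline spaces $V(Q_m)$ I would exploit exact polynomial reproduction. Functions in $V(Q_m)$ are piecewise polynomials of degree $m-1$ with knots at the integers, while $H_{2k-1}$ reproduces every polynomial of degree $\le 2k-1$; hence on each sampling interval $[x_i,x_{i+1})$ containing no integer knot the interpolant is \emph{exact} whenever $2k\ge m$, so that $\|f-Tf\|_2$ is supported on the finitely-overlapping knot-straddling intervals alone. The remaining task is to quantify this knot-straddling error, which is governed by the single jump of $f^{(m-1)}$ across each integer (recall $f\in C^{m-2}$), to show that its integrated $L^2$ contribution stays below $\|f\|_2^2$ up to $\delta_\nu=k\nu$, and to treat the complementary high-degree regime $2k<m$ by the sharp form of the quadratic-form estimate above. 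A successful spline argument would, I believe, reveal the correct shape of the extremal multiplier needed for the general statement, and thereby point to a proof of the conjecture for a much wider class of generators.
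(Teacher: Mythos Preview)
The statement you address is Conjecture~2 in Section~4 of the paper; it is presented there as an open problem, and the paper supplies no proof. Your text is accordingly a research outline rather than a proof, and you concede as much (``I therefore do not expect the iteration scheme alone to attain $k\nu$'').

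The substantive gap is that each of your three routes terminates in an unproven sharp inequality. The ``single sharp extremal estimate'' is described only as a quadratic form on the fibre model whose top eigenvalue should cross $1$ at $\delta_\nu=k\nu$; you neither write the form down nor indicate any mechanism for locating its spectrum, so there is nothing to verify. The Beurling--Landau route for $\mathcal{B}_\pi$ is the most concrete piece --- the bound $D^{-}(X)\ge\nu/\delta_\nu>1/k$ is correct --- but it presupposes a Hermite-sampling sufficiency theorem (that $k\,D^{-}(X)>1$ yields stable recovery from $\{f,f',\dots,f^{(k-1)}\}$ on $\mathcal{B}_\pi$), and you neither prove this nor point to a reference; it is itself a nontrivial statement. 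The spline idea has a further structural defect: exact reproduction by $H_{2k-1}$ on knot-free subintervals requires $2k\ge m$, yet even in that regime a sampling interval of length up to $\delta_\nu-(\nu-1)\gamma$ can contain several integer knots when $\gamma$ is small, so ``finitely-overlapping knot-straddling intervals'' is not the correct picture, and the promised $L^2$ estimate for the jump contribution is never carried out; for $m>2k$ (the range in which the paper's own Bernstein inequality \eqref{pap7eqn3.24} and the hypothesis $m\ge 2k+1$ of the spline theorem sit) your argument reverts entirely to the unconstructed quadratic form. None of these obstacles is an oversight --- they are precisely why the paper records the statement as a conjecture rather than a theorem.
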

\end{itemize}
\begin{itemize}
\item [(3)] In this paper, we did not discuss about  necessary condition for the stable set of sampling for shift-invariant spaces. It will be interesting to 
find the necessary condition for stable set of sampling for shift-invariant spaces involving the maximum density condition $\sup_{i}(x_{i+\nu}-x_i)$, $\nu\in\mathbb{N}$.
\end{itemize}

\section*{Ackowledgement}
The author  wish to thank  Sairam Kaliraj and N. Karimilla Bi  for reading the manuscript and useful discussion to improve the earlier version of the manuscript.

\end{document}